\theoremstyle{plain}
\newtheorem{thm}{Theorem}[section]
\newtheorem{lemma}[thm]{Lemma}
\newtheorem*{thm*}{Theorem}
\newtheorem*{lemma*}{Lemma}
\newtheorem*{prop*}{Proposition}
\newtheorem*{cor*}{Corollary}
\newtheorem*{conj*}{Conjecture}
\theoremstyle{definition}
\theoremstyle{remark}
\newtheorem*{rmk}{Remark}
\newcommand{\cc}{\mathbb{C}}
\begin{document}
\date{}

\title{Classification of some Solvable Leibniz Algebras}
\author{Ismail Demir, Kailash C. Misra and Ernie Stitzinger}
\address{Department of Mathematics, North Carolina State University, Raleigh, NC 27695-8205}
\email{idemir@ncsu.edu, misra@ncsu.edu, stitz@ncsu.edu}
\subjclass[2010]{17A32 , 17A60 }
\keywords{Leibniz Algebra, solvability, nilpotency, classification}
\thanks{KCM is partially supported by Simons Foundation grant \#  307555}

\begin{abstract}
Leibniz algebras are certain generalization of Lie algebras. In this paper we give classification of non-Lie solvable (left) Leibniz algebras of dimension $\leq 8$ with one dimensional derived subalgebra. We use the canonical forms for the congruence classes of matrices of bilinear forms to obtain our result. Our approach can easily be extended to classify these algebras of higher dimensions. We also revisit the classification of three dimensional non-Lie solvable (left) Leibniz algebras.

\end{abstract}

\maketitle
\bigskip
\section{Introduction}
Leibniz algebras introduced by Loday \cite{lodayfr} are natural generalization of Lie algebras. Earlier, such algebraic structures had been considered by A. Bloh who called them $D$-algebras \cite{bloh}. In this paper we assume the field to be $\mathbb{C}$, the field of complex numbers. A left (resp. right) Leibniz algebra $A$ is a $\mathbb{C}$-vector space equipped with a bilinear product $[ \, , \,] : A \times A \longrightarrow A$ such that the left (resp. right) multiplication operator is a derivation. A left Leibniz algebra is not necessarily a right Leibniz algebra. In this paper, following Barnes \cite{barnes1} we consider (left) Leibniz algebras. Of course, a Lie algebra is a Leibniz algebra, but not conversely.  For a Leibniz algebra $A$, we define the ideals $A^1 = A = A^{(1)}$,  $A^i = [A, A^{i-1}]$ and $A^{(i)} = [A^{(i-1)}, A^{(i-1)}]$ for $i \in \mathbb{Z}_{\geq 2}$. The Leibniz algebra $A$ is said be  abelian if $A^2 = 0$. It is nilpotent (resp. solvable) if $A^m = 0$ (resp. $A^{(m)} = 0$) for some positive integer $m$. An important abelian ideal of $A$ is $Leib(A)={\rm span}\{[a, a] \mid a\in A\}$. $A$ is a Lie algebra if and only if $Leib(A)={\rm span}\{[a, a] \mid a\in A\} = \{0\}$. The classification of non-Lie Leibniz algebras is still an open problem. So far the classification of non-Lie Leibniz algebras over $\cc$ of dimension less than or equal to four is known (see \cite{lodayfr}, \cite{3dim}, \cite{cil}, \cite{3comp}, \cite{our}, \cite{fourdimnil}, \cite{fourdim}). Most of these classification results are known for right Leibniz algebras.

In \cite{our} we introduced a technique of using canonical forms for the congruence classes of matrices of bilinear forms in the  classification of three dimensional non-Lie nilpotent Leibniz algebras. In this paper our main goal is to extend this technique to classify non-Lie solvable Leibniz algebras with one dimensional derived subalgebra. Our approach is simple and straightforward. We say that a Leibniz algebra $A$ is split if it can be written as a direct sum of two nontrivial ideals. Otherwise $A$ is said to be non-split. It suffices to determine the isomorphism classes of non-split Leibniz algebras. In particular, we show that the only isomorphism class of non-Lie non-split non-nilpotent solvable Leibniz algebra with a one dimensional derived subalgebra is the two dimensional solvable cyclic Leibniz algebra. After completion of this paper it was brought to our attention that a general classification result for (right) Leibniz algebras with one dimensional derived subalgebra is given in (\cite{AM}, Theorem 3.4) . However, the explicit isomorphism classes are hard to determine using this general result. 

In the last section we revisit the classification of three dimensional non-Lie solvable Leibniz algebras. Our approach is slightly different from that of \cite{3dim}, \cite{cil} and \cite{3comp}.

\maketitle
\bigskip
\section{Classification of solvable non-Lie Leibniz algebras \\ with one dimensional derived subalgebra}

First we consider finite dimensional non-Lie nilpotent Leibniz algebras.
Let $A$ be a $n$-dimensional non-Lie nilpotent Leibniz algebra with $\dim(A^2)=1$. Then $A^2=Leib(A)$ and $A^2={\rm span}\{x_n\}$ for some $0\neq x_n\in A$. Let $V$ be the complementary subspace to $A^2$ in $A$. Then for any $u, v\in V$, we have $[u, v]=cx_n$ for some $c\in \cc$. Define the bilinear form $f( \ , \ ): V\times V\rightarrow \cc$ by $f(u, v)=c$  for all $u, v\in V$. The canonical forms for the congruence classes of matrices associated with the bilinear form $f(\ ,\ )$ given in \cite{congr} is as follows. We denote

\begin{scriptsize}
\begin{center} $[A\backslash B]:= \left( \begin{array}{cc}
0 & B\\
A & 0\\
\end{array} \right)$
\end{center}

\end{scriptsize}

\begin{thm}\cite{congr} \label{cong} The matrix of the bilinear form $f( \ , \ ): V\times V\rightarrow \cc$ is congruent to a direct sum, uniquely determined up to permutation of summands, of canonical matrices of the following types:

\begin{scriptsize}

\begin{enumerate}
\item $A_{2k+1}=\left[ \begin{array}{cc} 
\left[ \begin{array}{cccc}
0 & 1 & & \\
  & \ddots & \ddots & \\
 & & 0 & 1\\
\end{array} \right] \backslash 
\left[ \begin{array}{cccc}
1 &  &  \\
0 & \ddots &  \\
  & \ddots & 1\\
 & & 0 \\
\end{array} \right]
\end{array} \right]_{(2k+1) \times (2k+1)}$

\item  $B_{2k}(c)=\left[ \begin{array}{cc} 
\left[ \begin{array}{cccc}
0 &  & & c \\
  &  & c & 1 \\
  & \udots & \udots & \\
c & 1 & & 0\\
\end{array} \right] \backslash 
\left[ \begin{array}{cccc}
 &  &  & 1 \\
 &  & 1 & c \\
  & \udots & \udots & \\
 1 & c & & 0 \\
\end{array} \right]
\end{array} \right]_{2k \times 2k},   \quad   c\neq \pm 1$.

\item $C_{2k+1}=
\left[ \begin{array}{ccccccc}
0 &  & & & & & 1 \\
  &  &  & & & 1 & 1 \\
 & & &  & \udots & \udots & \\
 &  & & 1 & 1 & & \\
 & & 1 & -1 & & & \\
 & \udots & \udots & & & & \\
1 & -1 &  & & &  & 0 \\
\end{array} \right]_{(2k+1) \times (2k+1)}$

\item  $D_{2k}=\left[ \begin{array}{cc} 
\left[ \begin{array}{cccc}
0 &  & & 1 \\
  &  & 1 & -1 \\
  & \udots & \udots & \\
1 & -1 & & 0\\
\end{array} \right] \backslash 
\left[ \begin{array}{cccc}
 &  &  & 1 \\
 &  & 1 & 1 \\
  & \udots & \udots & \\
 1 & 1 & & 0 \\
\end{array} \right]
\end{array} \right]_{2k \times 2k}$       \quad ($k$ even)

\item $E_{2k}=
\left[ \begin{array}{ccccccc}
0 &  & & & & & 1 \\
  &  &  & & & 1 & 1 \\
 & & &  & \udots & \udots & \\
 &  & & 1 & 1 & & \\
 & & -1 & 1 & & & \\
 & \udots & \udots & & & & \\
-1 & 1 &  & & &  & 0 \\
\end{array} \right]_{2k \times 2k}$

\item  $F_{2k}=\left[ \begin{array}{cc} 
\left[ \begin{array}{cccc}
0 &  & & -1 \\
  &  & -1 & 1 \\
  & \udots & \udots & \\
-1 & 1 & & 0\\
\end{array} \right] \backslash 
\left[ \begin{array}{cccc}
 &  &  & 1 \\
 &  & 1 & 1 \\
  & \udots & \udots & \\
 1 & 1 & & 0 \\
\end{array} \right]
\end{array} \right]_{2k \times 2k}$       \quad ($k$ odd)
\end{enumerate}
\end{scriptsize}
\end{thm}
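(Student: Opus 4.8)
The plan is to prove the classification by the standard two-step reduction for bilinear forms under congruence $F \mapsto S^{T} F S$: first remove the degenerate part, then analyze the nondegenerate part via its cosquare. It is convenient to phrase everything in terms of the bilinear-form space $(V, f)$ and its isometry class, so that a direct sum of canonical matrices corresponds to an orthogonal decomposition of $(V, f)$ into indecomposable pieces.

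First I would establish a regularizing decomposition: $(V, f)$ is an orthogonal direct sum of a nondegenerate space and a totally singular complement, and the singular complement breaks up into indecomposable pieces each isometric to $A_{2k+1}$. The mechanism is to examine the left radical $\{v : f(v, V) = 0\}$ and the right radical $\{v : f(V, v) = 0\}$ and to peel off one Kronecker-type block at a time, exactly as in the regularization of a singular matrix pencil. The structural point to record is that an indecomposable singular bilinear form is forced to have odd dimension $2k+1$; these are precisely the blocks $A_{2k+1}$, so the entire degenerate part is accounted for.

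For the nondegenerate part I would pass to the cosquare $\Phi = F^{-T} F$. A one-line computation gives $(S^{T} F S)^{-T}(S^{T} F S) = S^{-1}\Phi S$, so the conjugacy class of $\Phi$ is a congruence invariant. The substantive classical input is the converse: two nonsingular matrices are congruent if and only if their cosquares are similar, which I would prove by promoting an intertwiner of the cosquares to an explicit congruence. I would then determine which matrices occur as cosquares: since $\Phi^{-T} = F \Phi F^{-1}$ is similar to $\Phi$, and every matrix is similar to its transpose, $\Phi$ is similar to $\Phi^{-1}$; hence the Jordan structure of $\Phi$ is invariant under $\lambda \mapsto \lambda^{-1}$, so eigenvalues other than $\pm 1$ occur in reciprocal pairs while $\pm 1$ are the only self-paired values.

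Finally I would translate the Jordan data into the listed matrices and prove uniqueness. A reciprocal pair of Jordan blocks $J_{k}(\lambda) \oplus J_{k}(\lambda^{-1})$ with $\lambda \neq \pm 1$ is realized by the hyperbolic-type block $B_{2k}(c)$ for a suitable $c \neq \pm 1$, matching the off-diagonal $[\,\cdot \backslash \cdot\,]$ shape. Each self-paired eigenvalue $\lambda = \pm 1$ contributes self-dual indecomposables that carry an extra symmetric-versus-alternating invariant, and checking directly that these are exhausted by $C_{2k+1}$, $D_{2k}$ with $k$ even, $E_{2k}$, and $F_{2k}$ with $k$ odd is where I expect the main difficulty: the parity restrictions come from whether the nondegenerate form induced on a single Jordan block at $\lambda = \pm 1$ is symmetric or alternating, and tracking the eigenvalue sign against the block-size parity is the delicate bookkeeping. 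Uniqueness up to permutation of summands then follows from the Krull--Schmidt theorem for the category of bilinear-form spaces, together with two completeness facts: the singular blocks are pinned down by the ranks of $F$, $F^{T}$ and their products, and the nondegenerate part is pinned down by the similarity class --- hence the Jordan form --- of its cosquare.
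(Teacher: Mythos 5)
First, a point of comparison: the paper does not prove this statement at all --- Theorem \ref{cong} is imported verbatim, with citation, from \cite{congr} (the Turnbull--Aitken/Horn--Sergeichuk canonical form for congruence), so there is no internal proof to measure your outline against. Your overall two-step strategy (regularize, then classify the nonsingular part by the similarity class of the cosquare $F^{-T}F$) is indeed the strategy of that literature, and the cosquare half of your sketch is sound in outline, including the observation that $\Phi^{-T}=F\Phi F^{-1}$ forces $\Phi\sim\Phi^{-1}$ and hence the reciprocal pairing of Jordan data away from $\pm1$.

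However, your first step contains a genuine error that the rest of the argument cannot recover from. You assert that an indecomposable \emph{singular} bilinear form must have odd dimension and be isometric to some $A_{2k+1}$. This is false: $B_2(0)=\left(\begin{smallmatrix}0&1\\0&0\end{smallmatrix}\right)$ is singular, two-dimensional, and indecomposable under congruence (it is not congruent to a direct sum of two $1\times1$ blocks, since such a sum is symmetric and symmetry is a congruence invariant). Note that the family $B_{2k}(c)$ in the statement only excludes $c=\pm1$, so $c=0$ is allowed and supplies singular canonical blocks of every even size; the paper itself treats $S=B_{2k}(c=0)$ as a singular case in the proof of Theorem \ref{solvable}. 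Your scheme produces these blocks nowhere: they are excluded from your degenerate part (claimed to consist only of $A_{2k+1}$'s) and from your nondegenerate part (they have no cosquare). Concretely, applied to $F=\left(\begin{smallmatrix}0&1\\0&0\end{smallmatrix}\right)$ your reduction would force $F$ to be congruent to $A_1\oplus A_1=0$, which is absurd. The repair is to regularize correctly: the summands to be split off from the degenerate part are the singular Jordan blocks $J_k(0)$ of \emph{both} parities --- in the theorem's notation, the $A_{2k+1}$ together with the $B_{2k}(0)$ --- the $A_{2k+1}$ accounting only for the minimal indices of the pencil $F+\lambda F^{T}$, not for all of the degeneracy of $F$. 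Relatedly, ``totally singular complement'' is not an accurate description even of the $A_{2k+1}$ part, since the form does not vanish on $A_3$. The remaining deferred work (which self-dual indecomposables occur at cosquare eigenvalues $\pm1$, and the parity bookkeeping behind $C_{2k+1}$, $D_{2k}$, $E_{2k}$, $F_{2k}$) is, as you acknowledge, the hard part and is not carried out.
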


Using Theorem \ref{cong}, we choose a basis $\{x_1, x_2, \cdots , x_{n-1}\}$ for $V$ so that the matrix of the bilinear form $f( \ , \ ): V\times V\rightarrow \cc$ is the $(n-1) \times (n-1)$ matrix 
$N$ given in Theorem \ref{cong}. Then $A$ has basis $\{x_1, x_2, \cdots , x_{n-1}, x_n\}$ and the multiplication among the basis vectors is completely determined by the matrix $N$ since $Leib(A) ={\rm span}\{x_n\}$.

\begin{lemma}
The matrix of $f( \ , \ )$ is of the form $N=K\oplus0$ if and only if the associated nilpotent Leibniz algebra $A$ with ${\rm dim}(A^2) = 1$ is split. 
\end{lemma}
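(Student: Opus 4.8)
The plan is to first pin down the entire bracket of $A$ in the basis $\{x_1,\dots,x_{n-1},x_n\}$, and then translate the condition ``$A$ is split'' into a statement about the radical of the bilinear form $f$. Since $A$ is nilpotent with $\dim(A^2)=1$, the descending chain $A^2\supseteq A^3\supseteq\cdots$ cannot stabilize at $A^2$, so $A^3=[A,A^2]=0$ and hence $[a,x_n]=0$ for every $a\in A$. For the other type of product, substituting $x=y=a$, $z=b$ into the left Leibniz identity $[x,[y,z]]=[[x,y],z]+[y,[x,z]]$ gives $[[a,a],b]=0$, so $Leib(A)$ annihilates $A$ on the left; as $x_n\in Leib(A)=A^2$, this yields $[x_n,a]=0$ for all $a$. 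Consequently every product involving $x_n$ vanishes, and the multiplication of $A$ is completely encoded by $N$ via $[x_i,x_j]=N_{ij}\,x_n$ for $i,j\le n-1$.

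For the direction $N=K\oplus 0 \Rightarrow A$ split, I would reorder the adapted basis so that some $x_r$ spans a $1\times 1$ zero summand of the canonical form; then $f(x_r,x_j)=f(x_j,x_r)=0$ for all $j$, and together with the vanishing of all $x_n$-products this makes $x_r$ central. Thus $\mathrm{span}\{x_r\}$ and the span of the remaining basis vectors are both ideals whose direct sum is $A$, exhibiting $A$ as split. For the converse, suppose $A=I\oplus J$ with $I,J$ nontrivial ideals. Because $[I,J]\subseteq I\cap J=0=[J,I]$, one gets $A^2=I^2\oplus J^2$; since $\dim A^2=1$, one summand is zero, so after relabelling $J^2=0$ and $x_n\in I^2=A^2\subseteq I$. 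Then $J$ is abelian and $J\cap A^2\subseteq J\cap I=0$, so for any $w\in J$ and any $a\in A$ both $[w,a]$ and $[a,w]$ lie in $J\cap A^2=0$, making $w$ central. Writing $w=v+\lambda x_n$ with $v\in V$ and using $[x_n,\cdot]=[\cdot,x_n]=0$ gives $f(v,\cdot)=f(\cdot,v)=0$; choosing $w\neq 0$ forces $v\neq 0$ (otherwise $w\in\mathrm{span}\{x_n\}\cap J=0$), so $f$ has nonzero radical. Extending such a $v$ to a basis of $V$ shows $N$ is congruent to $[0]\oplus N'$, i.e. $N=K\oplus 0$.

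I expect the converse direction to be the main obstacle, since it requires correctly extracting the two vanishing facts about $x_n$, establishing the orthogonal decomposition $A^2=I^2\oplus J^2$, and then manufacturing a genuinely nonzero vector $v\in V$ in the two-sided radical of $f$ from an arbitrary nonzero element of the abelian ideal. The final passage from ``$f$ degenerate'' to ``$N=K\oplus 0$'' is then routine linear algebra: a nonzero two-sided radical vector splits off a $1\times 1$ zero block, and uniqueness of the canonical form in Theorem~\ref{cong} guarantees the zero summand appears in $N$ itself.
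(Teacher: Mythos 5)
Your proof is correct and follows essentially the same route as the paper: both directions hinge on the vanishing of all products involving $x_n$, the forward direction splits off an abelian ideal corresponding to the zero block, and the converse extracts an abelian ideal $J$ with $J\cap A^2=0$ and turns it into a zero summand of the canonical form. Your converse is in fact slightly more explicit than the paper's (which jumps directly from ``$I_2$ is abelian'' to ``$N=K\oplus 0$''), since you spell out the intermediate steps that elements of $J$ are central, hence lie in the two-sided radical of $f$, and that uniqueness of the canonical decomposition then forces a $1\times 1$ zero summand.
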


\begin{proof} Suppose the matrix of $f( \ , \ )$ is of the form $N=K\oplus0$ where $K$ is a $k \times k$ matrix. Set $I_1 = {\rm span}\{x_1, \cdots , x_k , x_n\}$ and $I_2 = {\rm span}\{x_{k+1}, \cdots ,  x_{n-1}\}$. Then $I_1$ and $I_2$ are ideals of $A$ and $A = I_1 \oplus I_2$. So $A$ is split.

Conversely, suppose $A$ is split. Then $A = I_1 \oplus I_2$ where $I_1, I_2$ are ideals of $A$. Without loss of generality we can assume that $A^2 = Leib(A) = {\rm span}\{x_n\}$ is contained in $I_1$. Then $[I_2 , I_2] \subseteq A^2 \cap I_2 = \{0\}$ which implies that $I_2$ is abelian. Hence the matrix $N = K \oplus 0$ for some $k \times k$ matrix $K$, $ k < n-1$.
\end{proof}

In \cite{our} we used the technique of congruence matrices to determine the isomorphism classes of non-Lie nilpotent Leibniz algebras of dimension three. In this paper we extend this technique to determine the isomorphism classes of non-Lie nilpotent Leibniz algebras of dimension $\leq 8$ with one dimensional derived subalgebra. It suffices to determine the isomorphism classes of non-split Leibniz algebras.

\begin{thm}
Let $A$ be a non-split non-Lie nilpotent Leibniz algebra of dimension $4$ with $\dim(A^2)=1$. Then $A$ is isomorphic to a Leibniz algebra spanned by $\{x_1, x_2, x_3, x_4\}$ with the nonzero products given by one of the following:
\begin{description}
\item[1] $[x_1, x_3]= x_4, [x_3, x_2]=x_4$.
\item[2] $[x_1, x_3]= x_4, [x_2, x_2]=x_4, [x_2, x_3]=x_4, [x_3, x_1]=x_4, [x_3, x_2]=-x_4$.
\item[3] $[x_1, x_2]=x_4, [x_2, x_1]=-x_4, [x_3, x_3]=x_4$.
\item[4] $[x_1, x_2]= x_4, [x_2, x_1]=-x_4, [x_2, x_2]=x_4, [x_3, x_3]=x_4$.
\item[5] $[x_1, x_2]= x_4, [x_2, x_1]=cx_4, [x_3, x_3]=x_4, \quad c\in \cc\backslash\{1, -1\}$.
\item[6] $[x_1, x_1]=x_4, [x_2, x_2]=x_4, [x_3, x_3]=x_4$.
\end{description}
\end{thm}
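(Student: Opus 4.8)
The plan is to convert the classification into a problem about $3\times 3$ matrices modulo a concrete group action, and then read off the answer from Theorem~\ref{cong} and the non-split Lemma. Since $A$ is non-Lie nilpotent with $\dim(A^2)=1$, we have $A^2=Leib(A)=\mathrm{span}\{x_4\}$, and nilpotency forces $A^3=[A,A^2]=0$, so $x_4$ is annihilated by every left and right multiplication. Writing $V=\mathrm{span}\{x_1,x_2,x_3\}$ for a complement of $A^2$, the product is encoded by the $3\times 3$ matrix $N$ of the bilinear form $f$, where $[x_i,x_j]=f(x_i,x_j)x_4$. First I would check that an isomorphism of such algebras carries the characteristic ideal $A^2$ to $A^2$ (so $x_4\mapsto\lambda x_4$, $\lambda\neq 0$) and induces a linear map $P$ on $V$; comparing the two sides of $[x_i,x_j]$ then shows that isomorphism classes correspond exactly to orbits of $N$ under $N\mapsto\lambda\,P^{\mathsf T}NP$ with $\lambda\in\cc^\ast$, $P\in GL_3(\cc)$. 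Adding multiples of $x_4$ to the $x_i$ does not affect $f$ because $A^3=0$.

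Next I would apply Theorem~\ref{cong} to list the congruence canonical forms of a $3\times 3$ matrix as direct sums of the admissible blocks, organized by the partitions $3=3$, $3=2+1$, and $3=1+1+1$. For a single $3\times 3$ block the options are $A_3$ and $C_3$; for an irreducible $2\times 2$ block the available types are $B_2(c)$ with $c\neq\pm 1$, $E_2$, and $F_2$, while $D_2$ is excluded because $D_{2k}$ requires $k$ even; and the only $1\times 1$ blocks are the zero form $[0]=A_1$ and the form $[1]=C_1$.

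By the preceding Lemma, $A$ is split precisely when $N$ has a summand $[0]=A_1$, so non-splitness forces every $1\times 1$ block to be $C_1=[1]$, i.e.\ a relation $[x_i,x_i]=x_4$. Moreover a $3\times 3$ form with zero symmetric part is antisymmetric, hence singular with a kernel vector lying in both radicals, hence split; therefore every non-split form is automatically non-Lie and no case is lost. What survives is exactly $A_3$, $C_3$, $E_2\oplus C_1$, $F_2\oplus C_1$, $B_2(c)\oplus C_1$, and $C_1\oplus C_1\oplus C_1$, and writing out the corresponding products yields the six algebras in the statement.

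Finally I would verify that the six forms are pairwise non-isomorphic and pin down the redundancy in the $B_2(c)$ family. The main tool is the cosquare $M^{-\mathsf T}M$ of a nonsingular $M$, whose similarity class is a complete congruence invariant and which is unchanged by the scaling $M\mapsto\lambda M$; in particular the extra scaling freedom merges nothing. Rank separates $A_3$ (rank $2$) from the rank-$3$ forms; the cosquare of $B_2(c)$ is $\mathrm{diag}(c,1/c)$, that of $E_2$ is $-I$, and that of $F_2$ is a Jordan block with eigenvalue $-1$, so these three are distinguished by their eigenvalues and by diagonalizability, while the swap $x_1\leftrightarrow x_2$ gives $B_2(c)\cong B_2(1/c)$, so the family is genuinely parametrized by $c$ up to $c\sim 1/c$. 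I expect the bookkeeping of these congruence invariants---especially identifying the degenerate values $c=\pm 1$ (which produce $E_2$ and $F_2$ rather than a $B_2$) and confirming the $c\sim 1/c$ identification---to be the main obstacle; the remaining passage between matrices and multiplication tables is routine.
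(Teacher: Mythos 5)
Your proposal is correct and takes essentially the same route as the paper: identify $A^2=Leib(A)=\mathrm{span}\{x_4\}$, note $[A,x_4]=[x_4,A]=0$, encode the product by the bilinear form on a complement $V$, apply Theorem \ref{cong} to reduce to the block decompositions of a $3\times 3$ congruence canonical form, and discard exactly those containing a zero $1\times 1$ block via the non-split Lemma. Your additional checks --- the orbit description $N\mapsto\lambda P^{\mathsf T}NP$, the observation that the surviving forms are automatically non-Lie, and the pairwise non-isomorphism argument including the $c\sim 1/c$ redundancy in family (5) --- go beyond what the paper writes down and are sound, apart from one harmless transposition: the cosquare of $F_2$ is $-I$ and that of $E_2$ is the Jordan block with eigenvalue $-1$, not the other way around.
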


\begin{proof} 
Let $A^2 = Leib(A) = {\rm span}\{x_4\}$. Then $[x_4, A] = 0$. Since $A$ is nilpotent and $\dim(A^2)=1$, we have $[A, x_4] = 0$. Let $V$ be the complementary subspace of $A^2$ in $A$. Then by Theorem \ref{cong}, the matrix of the bilinear form  $f( \ , \ )$ on $V$ with respect to an ordered basis $\{x_1, x_2, x_3\}$ is one of the following matrices where $c\neq1, -1$.

\begin{equation*}
 (i) \left( \begin{array}{ccc}
0 & 0 & 1\\
0 & 0 & 0 \\
0 & 1 & 0 \end{array} \right)\qquad
(ii) \left( \begin{array}{ccc}
0 & 0 & 1\\
0 & 1 & 1 \\
1 & -1 & 0\end{array} \right) \qquad
(iii) \left( \begin{array}{ccc}
0 & 1 & 0\\
-1 & 0 & 0 \\
0 & 0 & 1 \end{array}  \right) \]

\[(iv) \left( \begin{array}{ccc}
0 & 1 & 0\\
-1 & 1 & 0 \\
0 & 0 &  1 \end{array} \right)\qquad
(v) \left( \begin{array}{ccc}
0 & 1 & 0\\
c & 0 & 0 \\
0 & 0 & 1 \end{array}\right) \qquad
(vi) \left( \begin{array}{ccc}
1 & 0 & 0\\
0 & 1 & 0 \\
0 & 0 & 1 \end{array} \right) 
\end{equation*}

Using the definition of the bilinear form $f( \ , \ )$ on $V$ we easily see that the isomorphism class corresponding to the 
matrix $(i)$ has the only nontrivial multiplications among basis vectors given in $(1)$. Similarly, the isomorphism classes
corresponding to the matrices $(ii), (iii), (iv), (v), (vi)$ are $(2), (3), (4), (5), (6)$ respectively.
\end{proof}

\begin{thm}
Let $A$ be a non-split non-Lie nilpotent Leibniz algebra of dimension $5$ with $\dim(A^2)=1$. Then $A$ is isomorphic to a Leibniz algebra spanned by 
$\{x_1, x_2, x_3, x_4, x_5\}$ with the nonzero products given by one of the following:

\begin{description}
\item[1] $[x_1, x_4]= x_5, [x_2, x_3]=x_5, [x_2, x_4]=cx_5, [x_3, x_2]=cx_5, [x_4, x_1]=cx_5, [x_4, x_2]=x_5 , \quad c\in \cc\backslash\{1, -1\}$.
\item[2] $[x_1, x_4]= x_5, [x_2, x_3]=x_5, [x_2, x_4]=x_5, [x_3, x_2]=x_5, [x_4, x_1]=x_5, [x_4, x_2]=-x_5$.
\item[3] $[x_1, x_4]= x_5, [x_2, x_3]=x_5, [x_2, x_4]=x_5, [x_3, x_2]=-x_5, [x_3, x_3]=x_5, [x_4, x_1]=-x_5, [x_4, x_2]=x_5$.
\item[4] $[x_1, x_3]= x_5, [x_3, x_2]=x_5, [x_4, x_4]=x_5$.
\item[5] $[x_1, x_3]= x_5, [x_2, x_2]=x_5, [x_2, x_3]=x_5, [x_3, x_1]=x_5, [x_3, x_2]=-x_5, [x_4, x_4]=x_5$.
\item[6] $[x_1, x_2]=x_5, [x_2, x_1]=-x_5, [x_3, x_4]=x_5, [x_4, x_3]=-x_5, [x_4, x_4]=x_5$.
\item[7] $[x_1, x_2]=x_5, [x_2, x_1]=-x_5, [x_3, x_4]=x_5, [x_4, x_3]=cx_5, \quad c\in \cc\backslash\{1, -1\}$.
\item[8] $[x_1, x_2]=x_5, [x_2, x_1]=-x_5, [x_2, x_2]=x_5, [x_3, x_4]=x_5, [x_4, x_3]=-x_5, [x_4, x_4]=x_5$
\item[9] $[x_1, x_2]=x_5, [x_2, x_1]=-x_5, [x_2, x_2]=x_5, [x_3, x_4]=x_5, [x_4, x_3]=cx_5, \quad c\in \cc\backslash\{1, -1\}$.
\item[10] $[x_1, x_2]=x_5, [x_2, x_1]=c_1x_5, [x_3, x_4]=x_5, [x_4, x_3]=c_2x_5, \quad c_1, c_2\in \cc\backslash\{1, -1\}$.
\item[11] $[x_1, x_2]=x_5, [x_2, x_1]=-x_5, [x_3, x_3]=x_5, [x_4, x_4]=x_5$.
\item[12] $[x_1, x_2]= x_5, [x_2, x_1]=-x_5, [x_2, x_2]=x_5, [x_3, x_3]=x_5, [x_4, x_4]=x_5$.
\item[13] $[x_1, x_2]= x_5, [x_2, x_1]=cx_5, [x_3, x_3]=x_5, [x_4, x_4]=x_5 \quad c\in \cc\backslash\{1, -1\}$.
\item[14] $[x_1, x_1]=x_5, [x_2, x_2]=x_5, [x_3, x_3]=x_5, [x_4, x_4]=x_5$.
\end{description}
\end{thm}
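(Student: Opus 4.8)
The plan is to follow the dimension-$4$ argument verbatim and reduce the problem to enumerating congruence classes of $4\times 4$ matrices. First I would record the structure forced by the hypotheses: as $A$ is nilpotent with $\dim(A^2)=1$, we get $A^2=\mathrm{Leib}(A)=\mathrm{span}\{x_5\}$ and, exactly as in the $4$-dimensional case, $[x_5,A]=[A,x_5]=0$. Fixing a complement $V$ to $A^2$, all products are encoded by the bilinear form $f(u,v)=c$ where $[u,v]=cx_5$, via $[x_i,x_j]=N_{ij}x_5$ for the matrix $N$ of $f$. Since any isomorphism between two such algebras must send $A^2$ to $A^2$, it is a linear map on $V$ together with a rescaling of $x_5$; hence isomorphism classes correspond to congruence classes of $N$ (up to an overall nonzero scalar coming from the rescaling). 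By Theorem \ref{cong}, $N$ is congruent to a direct sum, unique up to permutation of summands, of the canonical blocks $A_{2k+1},B_{2k}(c),C_{2k+1},D_{2k},E_{2k},F_{2k}$.

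Next I would list all such direct sums of total size $4$, respecting the parity conditions built into Theorem \ref{cong}. The admissible blocks of size $\le 4$ are: in size $1$, $A_1=(0)$ and $(1)$; in size $2$, $B_2(c)$ with $c\ne\pm1$, $E_2$, and $F_2$; in size $3$, $A_3$ and $C_3$; and in size $4$, $B_4(c)$ with $c\ne\pm1$, $D_4$, and $E_4$. Here $D_2$ does not occur because $D_{2k}$ requires $k$ even, and $F_4$ does not occur because $F_{2k}$ requires $k$ odd. Running over the partitions $4,\ 3{+}1,\ 2{+}2,\ 2{+}1{+}1,\ 1{+}1{+}1{+}1$ produces every candidate matrix.

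Two prunings reduce this to exactly the fourteen listed algebras. By the Lemma, $N$ gives a split algebra precisely when it has a zero summand $A_1=(0)$; discarding these forces the only admissible size-$1$ block to be $(1)$, and a count of the remaining direct sums gives fifteen. The decisive second pruning uses the non-Lie hypothesis: $A$ is a Lie algebra iff $[v,v]=0$ for all $v$, i.e.\ iff $f$ is alternating, i.e.\ iff $N$ is skew-symmetric; and the only skew-symmetric canonical block of size $\le 4$ is $F_2$ (recall $F_4$ is absent). Therefore the unique alternating non-split form of size $4$ is $F_2\oplus F_2$, and excluding it drops the count from fifteen to fourteen. Reading the products off each surviving matrix --- $B_4(c),D_4,E_4$ from the single size-$4$ block; $A_3\oplus(1),C_3\oplus(1)$ from $3{+}1$; the five pairs $B_2(c_1)\oplus B_2(c_2),\,B_2(c)\oplus E_2,\,B_2(c)\oplus F_2,\,E_2\oplus E_2,\,E_2\oplus F_2$ from $2{+}2$; the three forms $B_2(c)\oplus(1)\oplus(1),\,E_2\oplus(1)\oplus(1),\,F_2\oplus(1)\oplus(1)$ from $2{+}1{+}1$; and $(1)^{\oplus4}$ from $1{+}1{+}1{+}1$ --- yields precisely presentations (1)--(14). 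Their pairwise non-isomorphism follows from the uniqueness clause of Theorem \ref{cong}.

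I expect the main obstacle to be bookkeeping rather than a single deep step: one must correctly decide which canonical blocks of each size actually occur (the $D$- and $F$-parity restrictions are easy to overlook), and --- the real crux --- recognize that the non-Lie condition removes exactly one case, the alternating form $F_2\oplus F_2$, which is what makes the answer fourteen and not fifteen. A secondary point to verify is that the overall scalar from rescaling $x_5$ introduces no further identifications among the surviving classes (it only accounts for the normalization of leading coefficients and the usual $c\leftrightarrow c^{-1}$ symmetry already built into the $B$-blocks), so the fourteen matrices really do represent distinct isomorphism classes. Once these points are settled, converting each matrix to its multiplication table is routine.
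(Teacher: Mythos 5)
Your proposal is correct and follows essentially the same route as the paper: reduce to the bilinear form on a complement of $A^2$, enumerate the canonical congruence classes of $4\times 4$ matrices by partitions of $4$, discard the split cases (those with a zero summand, by the Lemma) and the single alternating case $F_2\oplus F_2$ (the only Lie algebra in the list), leaving the $14$ classes. Your write-up is in fact more explicit than the paper's about the parity restrictions on the $D$- and $F$-blocks, the equivalence of the non-Lie condition with non-skew-symmetry, and the harmlessness of rescaling $x_5$ (automatic over $\cc$ since $N$ and $\lambda N$ are congruent), all of which the paper leaves implicit.
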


\begin{proof}
Let $A^2 = Leib(A) = {\rm span}\{x_5\}$. Then $[x_5, A] = 0$. Since $A$ is nilpotent and $\dim(A^2)=1$, we have $[A, x_5] = 0$. Let $V$ be the complementary subspace of $A^2$ in $A$. Then by Theorem \ref{cong}, there exists an ordered basis $\{x_1, x_2, x_3, x_4\}$ of 
$V$ such that the matrix of the bilinear form  $f( \ , \ )$ on $V$ is one of the following listed below. Here we group the matrices corresponding to each partition of $4$. 

\begin{equation*}
\begin{scriptsize}
\begin{array}{l | cr}
\mbox{   4 }  &    \quad  \quad   B_4, D_4, E_4 \\ \hline
\mbox{ 3+1 }  &    \quad  \quad   A_3\oplus 1, C_3\oplus 1 \\ \hline
\mbox{   2+2 }  &    \quad  \quad   F_2\oplus F_2, F_2\oplus E_2, F_2\oplus B_2, E_2\oplus E_2, E_2\oplus B_2, B_2\oplus B_2 \\ \hline
\mbox{ 2+1+1 }  &    \quad  \quad   F_2\oplus 1\oplus 1, E_2\oplus 1\oplus 1, B_2\oplus 1\oplus 1\\ \hline
\mbox{ 1+1+1+1} & \quad \quad 1\oplus 1 \oplus 1 \oplus 1  \\ \hline
 \end{array} 
\end{scriptsize}
\end{equation*}
Now $\{x_1, x_2, x_3, x_4, x_5\}$ is an ordered basis for $A$ and we have an isomorphism class corresponding to each matrix of the bilinear form 
$f( \ , \ )$ on $V$ listed above. Thus we have $14$ isomorphism classes with the nonzero multiplications among basis vectors given in the 
statement of the theorem since the algebra corresponding to $F_2 \oplus F_2$ is a Lie algebra.
\end{proof}

\begin{thm}
Let $A$ be a non-split non-Lie nilpotent Leibniz algebra of dimension $6$ with $\dim(A^2)=1$. Then $A$ is isomorphic to a Leibniz algebra spanned by $\{x_1, x_2, x_3, x_4, x_5, x_6\}$ with the nonzero products given by one of the following:
\begin{description}
\item[1] $[x_1, x_4]= x_6, [x_2, x_5]=x_6, [x_4, x_2]=x_6, [x_5, x_3]=x_6$.
\item[2] $[x_1, x_5]= x_6, [x_2, x_4]=x_6, [x_2, x_5]=x_6, [x_3, x_3]=x_6, [x_3, x_4]=x_6, [x_4, x_2]=x_6, [x_4, x_3]=-x_6, \newline [x_5, x_1]=x_6, [x_5, x_2]=-x_6 $.
\item[3] $[x_1, x_4]= x_6, [x_2, x_3]=x_6, [x_2, x_4]=cx_6, [x_3, x_2]=cx_6, [x_4, x_1]=cx_6, [x_4, x_2]=x_6, [x_5, x_5]=x_6, \newline c \in \cc\backslash\{1, -1\}$.
\item[4] $[x_1, x_4]= x_6, [x_2, x_3]=x_6, [x_2, x_4]=x_6, [x_3, x_2]=x_6, [x_4, x_1]=x_6, [x_4, x_2]=-x_6, [x_5, x_5]=x_6 $.
\item[5] $[x_1, x_4]= x_6, [x_2, x_3]=x_6, [x_2, x_4]=x_6, [x_3, x_2]=-x_6,[x_3, x_3]=x_6, [x_4, x_1]=-x_6, [x_4, x_2]=x_6, \newline [x_5, x_5]=x_6 $.
\item[6] $[x_1, x_3]= x_6, [x_3, x_2]=x_6, [x_4, x_5]=x_6, [x_5, x_4]=-x_6$.
\item[7]$[x_1, x_3]= x_6, [x_3, x_2]=x_6, [x_4, x_5]=x_6, [x_5, x_4]=-x_6, [x_5, x_5]=x_6$.
\item[8] $[x_1, x_3]= x_6, [x_3, x_2]=x_6, [x_4, x_5]=x_6, [x_5, x_4]=cx_6, \quad c\in \cc\backslash\{1, -1\}$.
\item[9] $[x_1, x_3]= x_6, [x_2, x_2]=x_6, [x_2, x_3]=x_6, [x_3, x_1]=x_6, [x_3, x_2]=-x_6, [x_4, x_5]=x_6, [x_5, x_4]=-x_6$.
\item[10] $[x_1, x_3]= x_6, [x_2, x_2]=x_6, [x_2, x_3]=x_6, [x_3, x_1]=x_6, [x_3, x_2]=-x_6, [x_4, x_5]=x_6, [x_5, x_4]=-x_6, \newline [x_5, x_5]=x_6$.
\item[11] $[x_1, x_3]= x_6, [x_2, x_2]=x_6, [x_2, x_3]=x_6, [x_3, x_1]=x_6, [x_3, x_2]=-x_6, [x_4, x_5]=x_6, [x_5, x_4]=cx_6, \newline c\in \cc\backslash\{1, -1\}$.
\item[12] $[x_1, x_3]= x_6, [x_3, x_2]=x_6, [x_4, x_4]=x_6, [x_5, x_5]=x_6$.
\item[13] $[x_1, x_3]= x_6, [x_2, x_2]=x_6, [x_2, x_3]=x_6, [x_3, x_1]=x_6, [x_3, x_2]=-x_6, [x_4, x_4]=x_6, [x_5, x_5]=x_6$.
\item[14] $[x_1, x_2]=x_6, [x_2, x_1]=-x_6, [x_3, x_4]=x_6, [x_4, x_3]=-x_6, [x_5, x_5]=x_6$.
\item[15] $[x_1, x_2]=x_6, [x_2, x_1]=-x_6, [x_3, x_4]=x_6, [x_4, x_3]=-x_6, [x_4, x_4]=x_6, [x_5, x_5]=x_6$.
\item[16] $[x_1, x_2]=x_6, [x_2, x_1]=-x_6, [x_3, x_4]=x_6, [x_4, x_3]=cx_6, [x_5, x_5]=x_6, \quad c\in \cc\backslash\{1, -1\}$.
\item[17] $[x_1, x_2]=x_6, [x_2, x_1]=-x_6, [x_2, x_2]=x_6, [x_3, x_4]=x_6, [x_4, x_3]=-x_6, [x_4, x_4]=x_6, [x_5, x_5]=x_6$.
\item[18] $[x_1, x_2]=x_6, [x_2, x_1]=-x_6, [x_2, x_2]=x_6, [x_3, x_4]=x_6, [x_4, x_3]=cx_6, [x_5, x_5]=x_6, \quad c\in \cc\backslash\{1, -1\}$.
\item[19] $[x_1, x_2]=x_6, [x_2, x_1]=c_1x_6, [x_3, x_4]=x_6, [x_4, x_3]=c_2x_6, [x_5, x_5]=x_6, \quad c_1, c_2\in \cc\backslash\{1, -1\}$.
\item[20] $[x_1, x_2]=x_6, [x_2, x_1]=-x_6, [x_3, x_3]=x_6, [x_4, x_4]=x_6, [x_5, x_5]=x_6$.
\item[21] $[x_1, x_2]= x_6, [x_2, x_1]=-x_6, [x_2, x_2]=x_6, [x_3, x_3]=x_6, [x_4, x_4]=x_6, [x_5, x_5]=x_6$.
\item[22] $[x_1, x_2]= x_6, [x_2, x_1]=cx_6, [x_3, x_3]=x_6, [x_4, x_4]=x_6, [x_5, x_5]=x_6, \quad c\in \cc\backslash\{1, -1\}$.
\item[23] $[x_1, x_1]=x_6, [x_2, x_2]=x_6, [x_3, x_3]=x_6, [x_4, x_4]=x_6, [x_5, x_5]=x_6$.
\end{description}
\end{thm}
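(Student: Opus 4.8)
The plan is to run the same argument used above for dimensions $4$ and $5$, now with $\dim V = 5$. First I would put $A^2 = Leib(A) = {\rm span}\{x_6\}$, so that $[x_6, A] = 0$, and then use nilpotency together with $\dim(A^2) = 1$ to conclude $[A, x_6] = 0$ as well. Choosing a complementary subspace $V$ to $A^2$, the entire product on $A$ is recorded by the bilinear form $f(\ ,\ ) \colon V \times V \to \cc$ through $[u,v] = f(u,v)\,x_6$; by Theorem \ref{cong} its matrix $N$, in a suitable ordered basis $\{x_1, \dots, x_5\}$, may be taken to be a direct sum of canonical blocks $A_{2k+1}, B_{2k}(c), C_{2k+1}, D_{2k}, E_{2k}, F_{2k}$, unique up to permutation of the summands.

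The core of the proof is then a finite enumeration. By the Lemma proved above, $A$ is non-split exactly when $N$ has no zero summand, i.e. when the degenerate $1 \times 1$ block $(0) = A_1$ does not occur; hence every $1 \times 1$ summand is the nondegenerate block $(1)$, written $1$, which yields $[x_i, x_i] = x_6$. I would then list all direct-sum decompositions of a $5 \times 5$ form into the admissible blocks, grouped by the partition of $5$ recording the block sizes and subject to the restrictions carried by Theorem \ref{cong}: $B_{2k}(c)$ needs $c \neq \pm 1$, $D_{2k}$ needs $k$ even (so $D_2$ is barred and $D_4$ is the only relevant $D$-block), and $F_{2k}$ needs $k$ odd (so $F_4$ is barred while $F_2$ remains). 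Thus the available summands are $1$ in size $1$; $B_2(c), E_2, F_2$ in size $2$; $A_3, C_3$ in size $3$; $B_4(c), D_4, E_4$ in size $4$; and $A_5, C_5$ in size $5$.

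A pleasant feature specific to this dimension is that the non-Lie condition is automatic and requires no case checking. A summand contributes an alternating (hence Lie) form only when it is of type $F_{2k}$, and every such block has even size; since $\dim V = 5$ is odd, no admissible decomposition can consist solely of $F$-blocks, so every non-split algebra arising here is non-Lie. This is precisely where dimension $6$ departs from dimension $5$: there $F_2 \oplus F_2$ had to be discarded as a Lie algebra, whereas here $F_2 \oplus F_2 \oplus 1$ survives as class $(14)$ on account of its $1$-block. Sweeping through the partitions $5$, $4+1$, $3+2$, $3+1+1$, $2+2+1$, $2+1+1+1$, and $1+1+1+1+1$ produces respectively $2, 3, 6, 2, 6, 3, 1$ decompositions, for a total of $23$; translating each canonical matrix back into products via $[x_i, x_j] = f(x_i, x_j)\,x_6$ reproduces the $23$ families listed.

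I expect the only real difficulty to be bookkeeping rather than theory: correctly counting the multisets of equal-sized summands --- the six $2+2+1$ patterns $\{F_2,F_2\}, \{F_2,E_2\}, \{F_2,B_2\}, \{E_2,E_2\}, \{E_2,B_2\}, \{B_2,B_2\}$ (each augmented by a $1$-block) and the six $3+2$ patterns --- keeping the parameter families $B_2(c)$, $B_4(c)$, and the two-parameter $B_2(c_1) \oplus B_2(c_2)$ correctly separated, and transcribing the longer product lists (notably for the size-$5$ blocks $A_5$ and $C_5$) faithfully from the matrices of Theorem \ref{cong}. That distinct decompositions give non-isomorphic algebras follows from the uniqueness assertion in Theorem \ref{cong}: any algebra isomorphism must carry $Leib(A)$ to $Leib(A')$ and therefore induces a congruence between the associated forms, up to an overall scaling of $x_6$ that preserves the congruence class of the form and hence each canonical block together with its parameters.
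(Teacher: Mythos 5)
Your proposal is correct and follows essentially the same route as the paper: reduce to the canonical congruence classes of the bilinear form on a complement $V$ of $A^2$, exclude zero summands via the non-split lemma, and enumerate the admissible block decompositions over the partitions of $5$, yielding the counts $2+3+6+2+6+3+1=23$. Your parity observation explaining why no purely alternating (Lie) decomposition can occur when $\dim V$ is odd is a nice explicit justification of a point the paper leaves implicit in this dimension.
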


\begin{proof}
Let $A^2 = Leib(A) = {\rm span}\{x_6\}$. Then $[x_6, A] = 0$. Since $A$ is nilpotent and $\dim(A^2)=1$, we have $[A, x_6] = 0$. Let $V$ be the complementary subspace of $A^2$ in $A$. Then by Theorem \ref{cong}, there exists an ordered basis $\{x_1, x_2, x_3, x_4, x_5\}$ of 
$V$ such that the matrix of the bilinear form  $f( \ , \ )$ on $V$ is one of the following listed below. Here we group the matrices corresponding to each partition of $5$. 

\begin{equation*}
\begin{scriptsize}
\begin{array}{l | cr}
\mbox{   5 }  &    \quad  \quad   A_5, C_5 \\ \hline
\mbox{ 4+1 }  &    \quad  \quad   B_4\oplus 1, D_4\oplus 1, E_4\oplus 1 \\ \hline
\mbox{   3+2 }  &    \quad  \quad   A_3\oplus F_2, A_3\oplus E_2, A_3\oplus B_2, C_3\oplus F_2, C_3\oplus E_2, C_3\oplus B_2 \\ \hline
\mbox{ 3+1+1 }  &    \quad  \quad   A_3\oplus 1\oplus 1, C_3\oplus 1\oplus 1 \\ \hline
\mbox{   2+2+1 }  &    \quad  \quad   F_2\oplus F_2 \oplus 1, F_2\oplus E_2 \oplus 1, F_2\oplus B_2 \oplus 1, E_2\oplus E_2 \oplus 1, E_2\oplus B_2 \oplus 1, B_2\oplus B_2 \oplus 1  \\  \hline
\mbox{ 2+1+1+1 }  &    \quad  \quad  F_2\oplus 1\oplus 1 \oplus 1,  E_2\oplus 1\oplus 1 \oplus 1, B_2\oplus 1\oplus 1 \oplus 1 \\  \hline
\mbox{ 1+1+1+1+1} & \quad \quad  1\oplus 1 \oplus 1 \oplus 1\oplus 1 \\ \hline
 \end{array}
\end{scriptsize}
\end{equation*}
Now $\{x_1, x_2, x_3, x_4, x_5, x_6\}$ is an ordered basis for $A$ and we have an isomorphism class corresponding to each matrix of the bilinear form 
$f( \ , \ )$ on $V$ listed above. Thus we have $23$ isomorphism classes with the nonzero multiplications among basis vectors given in the 
statement of this theorem.
\end{proof}

\begin{thm}
Let $A$ be a non-split non-Lie nilpotent Leibniz algebra of dimension $7$ with $\dim(A^2)=1$. Then $A$ is isomorphic to a Leibniz algebra spanned by $\{x_1, x_2, x_3, x_4, x_5, x_6, x_7\}$ with the nonzero products given by one of the following:
\begin{description}
\item[1] $[x_1, x_6]= x_7, [x_2, x_5]=x_7, [x_2, x_6]=cx_7, [x_3, x_4]=x_7, [x_3, x_5]=cx_7, [x_4, x_3]=cx_7, [x_5, x_2]=cx_7,
\newline    [x_5, x_3]=x_7, [x_6, x_1]=cx_7, [x_6, x_2]=x_7,  \quad c\in \cc\backslash\{1, -1\}$.
\item[2] $[x_1, x_6]= x_7, [x_2, x_5]=x_7, [x_2, x_6]=x_7, [x_3, x_4]=x_7, [x_3, x_5]=x_7, [x_4, x_3]=-x_7, [x_4, x_4]=x_7, \newline [x_5, x_2]=-x_7, [x_5, x_3]=x_7, [x_6, x_1]=-x_7, [x_6, x_2]=x_7$.
\item[3] $[x_1, x_6]= x_7, [x_2, x_5]=x_7, [x_2, x_6]=x_7, [x_3, x_4]=x_7, [x_3, x_5]=x_7, [x_4, x_3]=-x_7, [x_5, x_2]=-x_7, \newline [x_5, x_3]=x_7, [x_6, x_1]=-x_7, [x_6, x_2]=x_7$.
\item[4] $[x_1, x_4]= x_7, [x_2, x_5]=x_7, [x_4, x_2]=x_7, [x_5, x_3]=x_7, [x_6, x_6]=x_7$.
\item[5] $[x_1, x_5]= x_7, [x_2, x_4]=x_7, [x_2, x_5]=x_7, [x_3, x_3]=x_7, [x_3, x_4]=x_7, [x_4, x_2]=x_7, [x_4, x_3]=-x_7, \newline [x_5, x_1]=x_7, [x_5, x_2]=-x_7, [x_6, x_6]=x_7$.
\item[6] $[x_1, x_4]= x_7, [x_2, x_3]=x_7, [x_2, x_4]=cx_7, [x_3, x_2]=cx_7, [x_4, x_1]=cx_7, [x_4, x_2]=x_7, [x_5, x_6]=x_7, \newline [x_6, x_5]=-x_7, \quad c\in \cc\backslash\{1, -1\}$.
\item[7] $[x_1, x_4]= x_7, [x_2, x_3]=x_7, [x_2, x_4]=cx_7, [x_3, x_2]=cx_7, [x_4, x_1]=cx_7, [x_4, x_2]=x_7, [x_5, x_6]=x_7, \newline [x_6, x_5]=-x_7, [x_6, x_6]=x_7, \quad c\in \cc\backslash\{1, -1\}$.
\item[8]$[x_1, x_4]= x_7, [x_2, x_3]=x_7, [x_2, x_4]=c_1x_7, [x_3, x_2]=c_1x_7, [x_4, x_1]=c_1x_7, [x_4, x_2]=x_7, \newline [x_5, x_6]=x_7, [x_6, x_5]=c_2x_7, \quad c_1, c_2\in \cc\backslash\{1, -1\}$.
\item[9] $[x_1, x_4]= x_7, [x_2, x_3]=x_7, [x_2, x_4]=x_7, [x_3, x_2]=x_7, [x_4, x_1]=x_7, [x_4, x_2]=-x_7, [x_5, x_6]=x_7, \newline [x_6, x_5]=-x_7$.
\item[10] $[x_1, x_4]= x_7, [x_2, x_3]=x_7, [x_2, x_4]=x_7, [x_3, x_2]=x_7, [x_4, x_1]=x_7, [x_4, x_2]=-x_7, [x_5, x_6]=x_7, \newline [x_6, x_5]=-x_7, [x_6, x_6]=x_7$.
\item[11] $[x_1, x_4]= x_7, [x_2, x_3]=x_7, [x_2, x_4]=x_7, [x_3, x_2]=x_7, [x_4, x_1]=x_7, [x_4, x_2]=-x_7, [x_5, x_6]=x_7, \newline [x_6, x_5]=cx_7, \quad c\in \cc\backslash\{1, -1\}$.
\item[12] $[x_1, x_4]= x_7, [x_2, x_3]=x_7, [x_2, x_4]=x_7, [x_3, x_2]=-x_7,[x_3, x_3]=x_7, [x_4, x_1]=-x_7, \newline [x_4, x_2]=x_7, [x_5, x_6]=x_7, [x_6, x_5]=-x_7$.
\item[13] $[x_1, x_4]= x_7, [x_2, x_3]=x_7, [x_2, x_4]=x_7, [x_3, x_2]=-x_7,[x_3, x_3]=x_7, [x_4, x_1]=-x_7, \newline [x_4, x_2]=x_7, [x_5, x_6]=x_7, [x_6, x_5]=-x_7, [x_6, x_6]=x_7$.
\item[14] $[x_1, x_4]= x_7, [x_2, x_3]=x_7, [x_2, x_4]=x_7, [x_3, x_2]=-x_7,[x_3, x_3]=x_7, [x_4, x_1]=-x_7, \newline [x_4, x_2]=x_7, [x_5, x_6]=x_7, [x_6, x_5]=cx_7, \quad c\in \cc\backslash\{1, -1\}$.
\item[15] $[x_1, x_4]= x_7, [x_2, x_3]=x_7, [x_2, x_4]=cx_7, [x_3, x_2]=cx_7, [x_4, x_1]=cx_7, [x_4, x_2]=x_7, \newline [x_5, x_5]=x_7, [x_6, x_6]=x_7 \quad c\in \cc\backslash\{1, -1\}$.
\item[16] $[x_1, x_4]= x_7, [x_2, x_3]=x_7, [x_2, x_4]=x_7, [x_3, x_2]=x_7, [x_4, x_1]=x_7, [x_4, x_2]=-x_7, [x_5, x_5]=x_7, \newline [x_6, x_6]=x_7$.
\item[17] $[x_1, x_4]= x_7, [x_2, x_3]=x_7, [x_2, x_4]=x_7, [x_3, x_2]=-x_7,[x_3, x_3]=x_7, [x_4, x_1]=-x_7, \newline [x_4, x_2]=x_7, [x_5, x_5]=x_7, [x_6, x_6]=x_7$.
\item[18]$[x_1, x_3]= x_7, [x_3, x_2]=x_7, [x_4, x_6]= x_7, [x_6, x_5]=x_7$.
\item[19] $[x_1, x_3]= x_7, [x_3, x_2]=x_7, [x_4, x_6]= x_7, [x_5, x_5]=x_7, [x_5, x_6]=x_7, [x_6, x_4]=x_7, [x_6, x_5]=-x_7$.
\item[20] $[x_1, x_3]= x_7, [x_2, x_2]=x_7, [x_2, x_3]=x_7, [x_3, x_1]=x_7, [x_3, x_2]=-x_7, [x_4, x_6]= x_7, [x_5, x_5]=x_7, \newline [x_5, x_6]=x_7, [x_6, x_4]=x_7, [x_6, x_5]=-x_7$.
\item[21] $[x_1, x_3]= x_7, [x_3, x_2]=x_7, [x_4, x_5]=x_7, [x_5, x_4]=-x_7, [x_6, x_6]=x_7$.
\item[22] $[x_1, x_3]= x_7, [x_3, x_2]=x_7, [x_4, x_5]=x_7, [x_5, x_4]=-x_7, [x_5, x_5]=x_7, [x_6, x_6]=x_7$.
\item[23] $[x_1, x_3]= x_7, [x_3, x_2]=x_7, [x_4, x_5]=x_7, [x_5, x_4]=cx_7, [x_6, x_6]=x_7, \quad c\in \cc\backslash\{1, -1\}$.
\item[24] $[x_1, x_3]= x_7, [x_2, x_2]=x_7, [x_2, x_3]=x_7, [x_3, x_1]=x_7, [x_3, x_2]=-x_7, [x_4, x_5]=x_7, [x_5, x_4]=-x_7, \newline [x_6, x_6]=x_7$.
\item[25] $[x_1, x_3]= x_7, [x_2, x_2]=x_7, [x_2, x_3]=x_7, [x_3, x_1]=x_7, [x_3, x_2]=-x_7, [x_4, x_5]=x_7, [x_5, x_4]=-x_7, \newline [x_5, x_5]=x_7, [x_6, x_6]=x_7$.
\item[26] $[x_1, x_3]= x_7, [x_2, x_2]=x_7, [x_2, x_3]=x_7, [x_3, x_1]=x_7, [x_3, x_2]=-x_7, [x_4, x_5]=x_7, [x_5, x_4]=cx_7, \newline [x_6, x_6]=x_7, \quad c\in \cc\backslash\{1, -1\}$.
\item[27] $[x_1, x_3]= x_7, [x_3, x_2]=x_7, [x_4, x_4]=x_7, [x_5, x_5]=x_7, [x_6, x_6]=x_7$.
\item[28] $[x_1, x_3]= x_7, [x_2, x_2]=x_7, [x_2, x_3]=x_7, [x_3, x_1]=x_7, [x_3, x_2]=-x_7, [x_4, x_4]=x_7, [x_5, x_5]=x_7, \newline [x_6, x_6]=x_7$.
\item[29] $[x_1, x_2]=x_7, [x_2, x_1]=-x_7, [x_3, x_4]=x_7, [x_4, x_3]=-x_7, [x_5, x_6]=x_7, [x_6, x_5]=-x_7, [x_6, x_6]=x_7$.
\item[30] $[x_1, x_2]=x_7, [x_2, x_1]=-x_7, [x_3, x_4]=x_7, [x_4, x_3]=-x_7, [x_5, x_6]=x_7, [x_6, x_5]=cx_7, \quad c\in \cc\backslash\{1, -1\}$.
\item[31] $[x_1, x_2]=x_7, [x_2, x_1]=-x_7, [x_3, x_4]=x_7, [x_4, x_3]=-x_7, [x_4, x_4]=x_7, [x_5, x_6]=x_7, \newline [x_6, x_5]=-x_7, [x_6, x_6]=x_7$.
\item[32] $[x_1, x_2]=x_7, [x_2, x_1]=-x_7, [x_3, x_4]=x_7, [x_4, x_3]=-x_7, [x_4, x_4]=x_7, [x_5, x_6]=x_7, \newline [x_6, x_5]=cx_7, \quad c\in \cc\backslash\{1, -1\}$.
\item[33] $[x_1, x_2]=x_7, [x_2, x_1]=-x_7, [x_3, x_4]=x_7, [x_4, x_3]=c_1x_7, [x_5, x_6]=x_7, [x_6, x_5]=c_2x_7, \newline c_1, c_2\in \cc\backslash\{1, -1\}$.
\item[34] $[x_1, x_2]=x_7, [x_2, x_1]=-x_7, [x_2, x_2]=x_7, [x_3, x_4]=x_7, [x_4, x_3]=-x_7, [x_4, x_4]=x_7, \newline [x_5, x_6]=x_7, [x_6, x_5]=-x_7, [x_6, x_6]=x_7$.
\item[35] $[x_1, x_2]=x_7, [x_2, x_1]=-x_7, [x_2, x_2]=x_7, [x_3, x_4]=x_7, [x_4, x_3]=-x_7, [x_4, x_4]=x_7, \newline [x_5, x_6]=x_7, [x_6, x_5]=cx_7, \quad c\in \cc\backslash\{1, -1\}$.
\item[36] $[x_1, x_2]=x_7, [x_2, x_1]=-x_7, [x_2, x_2]=x_7, [x_3, x_4]=x_7, [x_4, x_3]=c_1x_7, [x_5, x_6]=x_7, \newline [x_6, x_5]=c_2x_7, \quad c_1, c_2\in \cc\backslash\{1, -1\}$.
\item[37] $[x_1, x_2]=x_7, [x_2, x_1]=c_1x_7, [x_3, x_4]=x_7, [x_4, x_3]=c_2x_7, [x_5, x_6]=x_7, [x_6, x_5]=c_3x_7, \newline c_1, c_2, c_3\in \cc\backslash\{1, -1\}$.
\item[38] $[x_1, x_2]=x_7, [x_2, x_1]=-x_7, [x_3, x_4]=x_7, [x_4, x_3]=-x_7, [x_5, x_5]=x_7, [x_6, x_6]=x_7$.
\item[39] $[x_1, x_2]=x_7, [x_2, x_1]=-x_7, [x_3, x_4]=x_7, [x_4, x_3]=-x_7, [x_4, x_4]=x_7, [x_5, x_5]=x_7, [x_6, x_6]=x_7$.
\item[40] $[x_1, x_2]=x_7, [x_2, x_1]=-x_7, [x_3, x_4]=x_7, [x_4, x_3]=cx_7, [x_5, x_5]=x_7, [x_6, x_6]=x_7, \quad c\in \cc\backslash\{1, -1\}$.
\item[41] $[x_1, x_2]=x_7, [x_2, x_1]=-x_7, [x_2, x_2]=x_7, [x_3, x_4]=x_7, [x_4, x_3]=-x_7, [x_4, x_4]=x_7, \newline [x_5, x_5]=x_7, [x_6, x_6]=x_7$.
\item[42] $[x_1, x_2]=x_7, [x_2, x_1]=-x_7, [x_2, x_2]=x_7, [x_3, x_4]=x_7, [x_4, x_3]=cx_7, [x_5, x_5]=x_7, [x_6, x_6]=x_7, \newline c\in \cc\backslash\{1, -1\}$.
\item[43] $[x_1, x_2]=x_7, [x_2, x_1]=c_1x_7, [x_3, x_4]=x_7, [x_4, x_3]=c_2x_7, [x_5, x_5]=x_7, [x_6, x_6]=x_7,  \newline c_1, c_2\in \cc\backslash\{1, -1\}$.
\item[44] $[x_1, x_2]=x_7, [x_2, x_1]=-x_7, [x_3, x_3]=x_7, [x_4, x_4]=x_7, [x_5, x_5]=x_7, [x_6, x_6]=x_7$.
\item[45] $[x_1, x_2]=x_7, [x_2, x_1]=-x_7, [x_2, x_2]=x_7, [x_3, x_3]=x_7, [x_4, x_4]=x_7, [x_5, x_5]=x_7, [x_6, x_6]=x_7$.
\item[46] $[x_1, x_2]=x_7, [x_2, x_1]=cx_7, [x_3, x_3]=x_7, [x_4, x_4]=x_7, [x_5, x_5]=x_7, [x_6, x_6]=x_7, \quad c\in \cc\backslash\{1, -1\}$.
\item[47] $[x_1, x_1]=x_7, [x_2, x_2]=x_7, [x_3, x_3]=x_7, [x_4, x_4]=x_7, [x_5, x_5]=x_7, [x_6, x_6]=x_7$.
\end{description}
\end{thm}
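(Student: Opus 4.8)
\emph{Proof proposal.} The plan is to follow verbatim the strategy of the proofs for dimensions $4$, $5$ and $6$: reduce the problem to listing the congruence classes of the associated bilinear form and read them off from Theorem \ref{cong}. First I would put $A^2 = Leib(A) = {\rm span}\{x_7\}$, so that $[x_7, A] = 0$ automatically, while nilpotency together with $\dim(A^2)=1$ forces $[A, x_7]=0$ as well. Picking a complementary subspace $V$ to $A^2$, the whole product on $A$ is recorded by the bilinear form $f$ on the $6$-dimensional space $V$ determined by $[u,v]=f(u,v)\,x_7$, and by Theorem \ref{cong} the matrix of $f$ is congruent, uniquely up to permutation of summands, to a direct sum of the canonical blocks of types $A,B,C,D,E,F$ together with the size-one block $1=[1]$.

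The core of the argument is then a careful enumeration of all such direct sums of total size $6$, organized by the eleven partitions of $6$. The one point requiring attention is which block types are admissible in each size: size $1$ admits only $1$; size $2$ admits $F_2, E_2, B_2(c)$ (with $D_2$ forbidden because $D_{2k}$ needs $k$ even); size $3$ admits $A_3, C_3$; size $4$ admits $B_4(c), D_4, E_4$ (with $F_4$ forbidden because $F_{2k}$ needs $k$ odd); size $5$ admits $A_5, C_5$; and size $6$ admits $B_6(c), E_6, F_6$ (with $D_6$ forbidden, $k=3$ being odd). Each $B$-block carries an independent parameter $c\in\cc\setminus\{1,-1\}$. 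When a part size repeats one counts multisets rather than ordered tuples, since the summands are unordered; for instance the partition $3+3$ yields the three forms $A_3\oplus A_3,\ A_3\oplus C_3,\ C_3\oplus C_3$, while $2+2+2$ yields the $\binom{5}{3}=10$ multisets drawn from $\{F_2,E_2,B_2\}$. Running through all partitions gives the counts $3,2,9,3,3,6,2,10,6,3,1$, which sum to $48$ congruence classes.

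Finally I would translate each canonical matrix into its multiplication table through $f$, exactly as before, and discard the Leibniz algebras that are actually Lie. Such an algebra is Lie precisely when $Leib(A)=0$, that is, when $f$ is alternating, which happens for exactly one of the $48$ forms, namely $F_2\oplus F_2\oplus F_2$ inside the partition $2+2+2$; removing it leaves $48-1=47$ classes, matching the list in the statement. Because none of the admissible blocks is the zero block, no canonical form here contains a zero summand, so by the Lemma every one of these algebras is non-split. I expect the only real difficulty to be bookkeeping rather than any single computation: keeping the parity exclusions ($D_2, F_4, D_6$) straight, counting the repeated-part multisets for $3+3$, $2+2+2$ and $2+2+1+1$ correctly, and invoking the uniqueness clause of Theorem \ref{cong} to guarantee that distinct canonical forms give pairwise non-isomorphic algebras.
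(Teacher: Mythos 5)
Your proposal is correct and follows essentially the same route as the paper: reduce to the bilinear form on a $6$-dimensional complement, enumerate the canonical congruence classes by the eleven partitions of $6$ (with the same per-partition counts $3,2,9,3,3,6,2,10,6,3,1$ totalling $48$), and discard the single alternating form $F_2\oplus F_2\oplus F_2$ to arrive at $47$ classes. Your explicit justification of the parity exclusions, the non-splitness via the Lemma, and the Lie/alternating criterion is in fact slightly more detailed than the paper's own proof, which simply tabulates the canonical matrices and asserts the count.
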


\begin{proof}
Let $A^2 = Leib(A) = {\rm span}\{x_7\}$. Then $[x_7, A] = 0$. Since $A$ is nilpotent and $\dim(A^2)=1$, we have $[A, x_7] = 0$. Let $V$ be the complementary subspace of $A^2$ in $A$. Then by Theorem \ref{cong}, there exists an ordered basis $\{x_1, x_2, x_3, x_4, x_5, x_6\}$ of 
$V$ such that the matrix of the bilinear form  $f( \ , \ )$ on $V$ is one of the following listed below. Here we group the matrices corresponding to each partition of $6$. 

\begin{equation*}
\begin{scriptsize}
\begin{array}{l | cr}
\mbox{   6 }  &    \quad  \quad   B_6, E_6, F_6 \\ \hline

\mbox{ 5+1 }  &    \quad  \quad  A_5\oplus 1, C_5\oplus 1 \\ \hline

\mbox{   4+2 }  &    \quad  \quad   B_4\oplus F_2, B_4\oplus E_2, B_4\oplus B_2, D_4\oplus F_2, D_4\oplus E_2, D_4\oplus B_2, E_4\oplus F_2, E_4\oplus E_2, E_4\oplus B_2 \\ \hline

\mbox{ 4+1+1 }  &    \quad  \quad   B_4\oplus 1\oplus 1, D_4\oplus 1\oplus 1, E_4\oplus 1\oplus 1 \\ \hline

\mbox{   3+3 }  &    \quad  \quad   A_3\oplus A_3 , A_3\oplus C_3, C_3\oplus C_3  \\  \hline

\mbox{ 3+2+1 }  &    \quad  \quad  A_3 \oplus F_2\oplus 1, A_3 \oplus E_2\oplus 1, A_3 \oplus B_2\oplus 1, C_3 \oplus F_2\oplus 1, C_3 \oplus E_2\oplus 1, C_3 \oplus B_2\oplus 1 \\  \hline

\mbox{ 3+1+1+1} & \quad \quad A_3\oplus 1\oplus 1 \oplus 1, C_3\oplus 1\oplus 1 \oplus 1 \\ \hline

\mbox{ 2+2+2 }  &    \quad  \quad   F_2\oplus F_2\oplus F_2,  F_2\oplus F_2\oplus E_2, F_2\oplus F_2\oplus B_2, F_2\oplus E_2\oplus E_2, F_2\oplus E_2\oplus B_2,  \\ 
\mbox{ }  &    \quad  \quad F_2\oplus B_2\oplus B_2, E_2\oplus E_2\oplus E_2, E_2\oplus E_2\oplus B_2, E_2\oplus B_2\oplus B_2, B_2\oplus B_2\oplus B_2  \\  \hline

\mbox{ 2+2+1+1 }  &    \quad  \quad   F_2\oplus F_2\oplus 1\oplus 1, F_2\oplus E_2\oplus 1\oplus 1, F_2\oplus B_2\oplus 1\oplus 1,   \\
\mbox{ }  &    \quad  \quad  E_2\oplus E_2\oplus 1\oplus 1, E_2\oplus B_2\oplus 1\oplus 1, B_2\oplus B_2\oplus 1\oplus 1 \\  \hline

\mbox{ 2+1+1+1+1 }  &    \quad  \quad  F_2\oplus 1\oplus 1 \oplus 1\oplus 1, E_2\oplus 1\oplus 1 \oplus 1\oplus 1, B_2\oplus 1\oplus 1 \oplus 1\oplus 1 \\ \hline

\mbox{ 1+1+1+1+1+1 }  &    \quad  \quad   1\oplus 1\oplus 1 \oplus 1 \oplus 1 \oplus 1 \\ \hline
 \end{array}
\end{scriptsize}
\end{equation*}
Now $\{x_1, x_2, x_3, x_4, x_5, x_6, x_7\}$ is an ordered basis for $A$ and we have an isomorphism class corresponding to each matrix of the bilinear form 
$f( \ , \ )$ on $V$ listed above. Thus we have $47$ isomorphism classes with the nonzero multiplications among basis vectors given in the 
statement of the theorem since the algebra corresponding to $F_2 \oplus F_2 \oplus F_2$ is a Lie algebra.
\end{proof}

Now suppose $A$ is a non-split non-Lie nilpotent Leibniz algebra of dimension $8$ with $\dim(A^2)=1$. Let $A^2 = Leib(A) = {\rm span}\{x_8\}$. Then $[x_8, A] = 0$. Since $A$ is nilpotent and $\dim(A^2) = 1$, we have $[A, x_8] = 0$. Let $V$ be the complementary subspace of $A^2$ in $A$. Then by Theorem \ref{cong}, there exists an ordered basis $\{x_1, x_2, x_3, x_4, x_5, x_6, x_7\}$ of 
$V$ such that the matrix of the bilinear form  $f( \ , \ )$ on $V$ is one of the following listed below. Here we group the matrices corresponding to each partition of $7$.

\begin{equation*}
\begin{scriptsize}
\begin{array}{l | cr}

\mbox{   7 }  &    \quad  \quad   A_7, C_7 \\ \hline

\mbox{ 6+1 }  &    \quad  \quad   B_6\oplus 1, E_6\oplus 1, F_6\oplus 1 \\ \hline

\mbox{   5+2 }  &    \quad  \quad   A_5\oplus F_2, A_5\oplus E_2, A_5\oplus B_2, C_5\oplus F_2, C_5\oplus E_2, C_5\oplus B_2\\ \hline

\mbox{ 5+1+1 }  &    \quad  \quad  A_5\oplus 1\oplus 1, C_5\oplus 1\oplus 1 \\ \hline

\mbox{   4+3 }  &    \quad  \quad   B_4\oplus A_3 , B_4\oplus C_3, D_4\oplus A_3 , D_4\oplus C_3, E_4\oplus A_3 , E_4\oplus C_3, \\  \hline

\mbox{ 4+2+1 }  &    \quad  \quad   B_4 \oplus F_2\oplus 1, B_4 \oplus E_2\oplus 1, B_4 \oplus B_2\oplus 1, D_4 \oplus F_2\oplus 1, D_4 \oplus E_2\oplus 1,  \\ 
\mbox{ }  &    \quad  \quad  D_4 \oplus B_2\oplus 1, E_4 \oplus F_2\oplus 1, E_4 \oplus E_2\oplus 1, E_4 \oplus B_2\oplus 1, \\
 \hline

\mbox{ 4+1+1+1} & \quad \quad B_4\oplus 1\oplus 1 \oplus 1, D_4\oplus 1\oplus 1 \oplus 1, E_4\oplus 1\oplus 1 \oplus 1 \\ \hline

\mbox{ 3+3+1 }  &    \quad  \quad  A_3\oplus A_3\oplus 1, A_3\oplus C_3\oplus 1, C_3\oplus C_3\oplus 1 \\ \hline

\mbox{ 3+2+2 }  &    \quad  \quad   A_3\oplus F_2\oplus F_2, A_3\oplus F_2\oplus E_2, A_3\oplus F_2\oplus B_2, A_3\oplus E_2\oplus E_2, A_3\oplus E_2\oplus B_2,  A_3\oplus B_2\oplus B_2, \\ 
\mbox{ }  &    \quad  \quad   C_3\oplus F_2\oplus F_2, C_3\oplus F_2\oplus E_2, C_3\oplus F_2\oplus B_2, C_3\oplus E_2\oplus E_2, C_3\oplus E_2\oplus B_2, C_3\oplus B_2\oplus B_2 \\   \hline

\mbox{ 3+2+1+1 }  &    \quad  \quad   A_3 \oplus F_2\oplus 1\oplus 1, A_3 \oplus E_2\oplus 1\oplus 1, A_3 \oplus B_2\oplus 1\oplus 1, C_3 \oplus F_2\oplus 1\oplus 1,  \\  
\mbox{ }  &    \quad  \quad C_3 \oplus E_2\oplus 1\oplus 1, C_3 \oplus B_2\oplus 1\oplus 1 \\ \hline

\mbox{ 3+1+1+1+1 }  &    \quad  \quad  A_3 \oplus 1 \oplus 1 \oplus 1 \oplus 1,  C_3 \oplus 1 \oplus 1 \oplus 1 \oplus 1 \\ \hline

\mbox{ 2+2+2+1 }  &    \quad  \quad   F_2\oplus F_2\oplus F_2 \oplus 1, F_2\oplus F_2\oplus E_2 \oplus 1,   F_2\oplus F_2\oplus B_2 \oplus 1, F_2\oplus E_2\oplus E_2 \oplus 1,  F_2\oplus E_2\oplus B_2 \oplus 1, \\
\mbox{ }  &    \quad  \quad  F_2\oplus B_2\oplus B_2 \oplus 1, E_2\oplus E_2\oplus E_2 \oplus 1, E_2\oplus E_2\oplus B_2 \oplus 1, E_2\oplus B_2\oplus B_2 \oplus 1, B_2\oplus B_2\oplus B_2 \oplus 1, \\ \hline

\mbox{ 2+2+1+1+1 }  &    \quad  \quad   F_2\oplus F_2\oplus 1 \oplus 1\oplus 1, F_2\oplus E_2\oplus 1 \oplus 1\oplus 1, F_2\oplus B_2\oplus 1 \oplus 1\oplus 1, \\  
\mbox{ }  &    \quad  \quad E_2\oplus E_2\oplus 1 \oplus 1\oplus 1,  E_2\oplus B_2\oplus 1 \oplus 1\oplus 1, B_2\oplus B_2\oplus 1 \oplus 1\oplus 1, \\ \hline

\mbox{ 2+1+1+1+1+1 }  &    \quad  \quad   F_2\oplus 1\oplus 1 \oplus 1\oplus 1\oplus 1, E_2\oplus 1\oplus 1 \oplus 1\oplus 1\oplus 1, B_2\oplus 1\oplus 1 \oplus 1\oplus 1\oplus 1 \\ \hline

\mbox{ 1+1+1+1+1+1+1 }  &    \quad  \quad   1\oplus 1\oplus 1 \oplus 1 \oplus 1 \oplus 1 \oplus 1  \\ \hline
\end{array}
\end{scriptsize}
\end{equation*}
Now $\{x_1, x_2, x_3, x_4, x_5, x_6, x_7, x_8\}$ is an ordered basis for $A$ and we have an isomorphism class corresponding to each matrix of the bilinear form 
$f( \ , \ )$ on $V$ listed above. Then as before there are $74$ isomorphism classes (one corresponding to each matrix) of non-split non-Lie nilpotent Leibniz algebra of dimension $8$ with ${\rm dim}(A^2) = 1$.

\begin{thm} \label{solvable} Let $A$ be a non-split non-Lie non-nilpotent solvable Leibniz algebra with $\dim(A^2)=1$. Then $\dim(A)=2$ with basis $\{x_1, x_2\}$ and the nonzero multiplications given by:
\begin{center}
$[x_1, x_1]=x_2, [x_1, x_2]=x_2$.
\end{center}  
\end{thm}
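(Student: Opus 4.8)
The plan is to reduce everything to the bilinear data already used in the nilpotent case, together with one extra linear functional recording how $A$ fails to be nilpotent. First I would note that, since $A$ is non-Lie, $Leib(A)\neq 0$, and as $Leib(A)\subseteq A^2$ with $\dim(A^2)=1$ we get $A^2=Leib(A)={\rm span}\{x\}$. The left Leibniz identity applied with two equal arguments yields $[[a,a],b]=0$ for all $a,b\in A$, so every element of $Leib(A)$ left-annihilates $A$; in particular $[x,A]=0$ and $[x,x]=0$. Choosing a complement $V$ to $A^2$, I would record the two pieces of structure: the bilinear form defined by $[u,v]=f(u,v)\,x$ on $V$ (as in the nilpotent discussion) and the linear functional defined by $[u,x]=\phi(u)\,x$, which is well defined because $[u,x]\in A^2$.

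Next I would pin down the non-nilpotency and the shape of $f$. Since $A^3=[A,A^2]=[A,x]={\rm span}\{\phi(u)\,x : u\in V\}$, the algebra is nilpotent exactly when $\phi=0$, so being non-nilpotent forces $\phi\neq 0$. Feeding $a,b,c\in V$ into the Leibniz identity $[a,[b,c]]=[[a,b],c]+[b,[a,c]]$ and using $[x,\cdot]=0$ collapses it to the single relation $f(b,c)\phi(a)=f(a,c)\phi(b)$; fixing $a_0$ with $\phi(a_0)=1$ and setting $\lambda(c):=f(a_0,c)$ gives $f(u,v)=\phi(u)\lambda(v)$. The remaining placements of $x$ in the identity produce no further constraints, which I would check quickly.

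The decisive step is to exploit non-splitness to force $\dim V=1$. If $\dim V\geq 2$ then $\ker\phi\neq 0$, and for any $0\neq u\in\ker\phi$ I would verify that $z:=u-\lambda(u)\,x$ is a nonzero two-sided annihilator: for $v\in V$ we have $[z,v]=\phi(u)\lambda(v)\,x=0$ and $[v,z]=(\phi(v)\lambda(u)-\lambda(u)\phi(v))\,x=0$, while $[z,x]=\phi(u)\,x=0$ and $[x,z]=0$. Since any subspace containing $A^2$ is automatically an ideal, and $z\notin A^2$, choosing a hyperplane $I\supseteq A^2$ with $z\notin I$ exhibits $A=I\oplus{\rm span}\{z\}$ as a direct sum of ideals, contradicting non-splitness; hence $\dim V=1$ and $\dim A=2$. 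Finally, taking $x_1\in V$ with $\phi(x_1)=1$ and $x_2=x$ gives $[x_1,x_2]=x_2$ and $[x_1,x_1]=s\,x_2$ for some scalar $s$, while $[x_2,\cdot]=0$; replacing $x_1$ by $x_1+(1-s)x_2$ normalizes $[x_1,x_1]=x_2$ and preserves $[x_1,x_2]=x_2$, yielding the stated multiplication. I expect the main obstacle to be this splitting step --- correctly identifying the annihilator $z$ and making the ideal decomposition rigorous --- together with the easy but easy-to-miss point that the $s=0$ algebra is not a new isomorphism type but is carried onto the normal form by the shear $x_1\mapsto x_1+(1-s)x_2$.
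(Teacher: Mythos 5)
Your proof is correct, but it takes a genuinely different route from the paper's. The paper extends the bilinear form to all of $A$, invokes the congruence canonical forms of Theorem \ref{cong}, and then runs a case elimination: it first shows the canonical matrix $S$ must be indecomposable (a decomposition $K\oplus M$ with one block zero makes $A$ split, and with both blocks nonzero it violates a Leibniz identity), then that $S$ must be singular because $[x^{\prime},A]=0$, leaving only $S=A_{2k+1}$ or $B_{2k}(c=0)$, and finally kills every case except $B_2(0)$ by chasing the coordinates of $x^{\prime}$ through further Leibniz identities until $x^{\prime}=a_1x_1$ contradicts $x^{\prime}\in Leib(A)$. You bypass the canonical-form machinery entirely: one application of the Leibniz identity to three elements of the complement $V$ gives $f(b,c)\phi(a)=f(a,c)\phi(b)$, hence the factorization $f=\phi\otimes\lambda$ (the form is rank one), and then any $0\neq u\in\ker\phi$ yields the two-sided annihilator $z=u-\lambda(u)x$, which spans a one-dimensional ideal complemented by any hyperplane containing $A^2$ and avoiding $z$ --- contradicting non-splitness unless $\dim V=1$. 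I checked the individual steps ($[[a,a],b]=0$ from the left Leibniz identity, $A^3=0$ iff $\phi=0$, the annihilation computations for $z$, the fact that any subspace containing $A^2$ is an ideal, and the final shear $x_1\mapsto x_1+(1-s)x_2$) and they all hold. What your argument buys is brevity, independence from Theorem \ref{cong}, and a structural explanation: non-nilpotency forces the multiplication to be rank one, so any complement of dimension at least two produces a central direct summand. What the paper's version buys is uniformity with the nilpotent classification occupying the rest of Section 2, where the congruence canonical forms are doing the real work.
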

\begin{proof}
Let $A$ be $n$-dimensional non-Lie non-nilpotent solvable Leibniz algebra with $\dim(A^2)=1$. Let $A^2=Leib(A)$ and $A^2={\rm span}\{x^{\prime}\}$ for some $0\neq x^{\prime}\in A$. Then for any $u, v\in A$, we have $[u, v]=cx^{\prime}$ for some $c\in \cc$. Define the bilinear form $f( \ , \ ): A\times A\rightarrow \cc$ by $f(u, v)=c$  for all $u, v\in A$. The canonical forms for the congruence classes of matrices associated with the bilinear form $f(\ ,\ )$ are as given in Theorem \ref{cong}. We choose an ordered basis $\{x_1, x_2, \ldots, x_n\}$ of $A$ such that $[x_i, x_j]=a_{ij}x^{\prime}$ where the congruence matrix $S=(a_{ij})^{n}_{i, j=1}$ is one of the matrix given in Theorem \ref{cong} or the direct sum of these matrices. Let $x^{\prime}=a_1x_1+ a_2x_2+ \cdots + a_nx_n\in A$ for some $a_1, a_2, \ldots, a_n\in\cc$. Observe that $[x^{\prime}, A]=0$. Since $A$ is a non-Lie Leibniz algebra the matrix $S\neq0$.
\par
If possible suppose  $S=K\oplus M$ where $K$ be a $k\times k$ and $M$ be $(n-k)\times (n-k)$ matrices given in Theorem \ref{cong}. Assume $K=0$ and $M\neq0$. Write $x^{\prime}=y+a_{k+1}x_{k+1}+\cdots+ a_nx_n$ where $y=a_1x_1+ a_2x_2+ \cdots + a_kx_k$. If $y=0$ choose $I_1={\rm span}\{x_1, x_2, \ldots, x_k\}$ and $I_2={\rm span}\{x_{k+1}, x_{k+2}, \ldots, x_n\}$. If $y\neq0$, let $W$ be a complementary subspace to ${\rm span}\{y\}$ in ${\rm span}\{x_1, x_2, \ldots, x_k\}$. Choose an ordered basis $\{z_1, z_2, \ldots, z_{k-1}\}$ for $W$. Notice that the matrix $S$ is not changed after this choice. Let $I_1={\rm span}\{z_1, z_2, \ldots, z_{k-1}\}$ and $I_2={\rm span}\{x_{k+1}, x_{k+2}, \ldots, x_n, y\}$. Then $I_1$ and $I_2$ are ideals and $A=I_1\oplus I_2$ which is a contradiction since $A$ is non-split. Similarly $K\neq0$ and $M=0$ leads to a contradiction.
\par
Now assume $K\neq0$ and $M\neq0$. Then $[x_r, x_s]=\alpha x^{\prime}, [x_t, x_u]=\beta x^{\prime}$ for some $1\le r, s\le k, \ k+1\le t, u\le n, \alpha\neq0, \beta\neq0$. Furthermore, since $A$ is non-nilpotent solvable Leibniz algebra we have $[z, x^{\prime}]=\gamma x^{\prime}$ for some $z\in A, \gamma\neq0$. By Leibniz identity $\alpha\gamma x^{\prime}= [z, [x_r, x_s]]=[[z, x_r], x_s]+ [x_r, [z, x_s]]= [x_r, [z, x_s]]$ since $[z, x_r]\in Leib(A)$. Therefore, $[z, x_s]\neq0$ and hence $[x_r, x^{\prime}]\neq0$. Again by Leibniz identity $\beta [x_r, x^{\prime}]= [x_r, [x_t, x_u]]=[[x_r, x_t], x_u]+ [x_t, [x_r, x_u]]=0$ since $[x_r, x_t]=0=[x_r, x_u]$. This is a contradiction since $\beta\neq0, [x_r, x^{\prime}]\neq0$.
\par
Hence the matrix $S$ of $f( \ , \ )$ is indecomposable and it is one of the matrices listed in Theorem \ref{cong}. Since $[x^{\prime}, A]=0$ the matrix $S$ must be singular. So by Theorem \ref{cong},  $S=A_{2k+1}(k\neq0) \ \rm{or} \ B_{2k}(c=0)$ since $A_1=0$ and other matrices are nonsingular.
\par
Suppose $S=A_{2k+1}(k\neq0)$. Note that here $n=2k+1$. In this case we have the following nonzero products:  $[x_1, x_{k+2}]=[x_{2k+1}, x_{k+1}]=[x_i, x_{k+i+1}]=[x_{k+i}, x_i]= x^{\prime} = a_1x_1+ a_2x_2+ \cdots + a_{2k+1}x_{2k+1}$ for $2\le i\le k$. Since $[x_i, x_{k+2}]=0,$ and $ [x_1, x_i]\in Leib(A)$ for $2\le i\le 2k+1$ we have \quad $0=[x_1, [x_i, x_{k+2}]]=[[x_1, x_i], x_{k+2}]+ [x_i, [x_1, x_{k+2}]] = [x_i, [x_1, x_{k+2}]]$. Hence $0=[x_i, [x_1, x_{k+2}]] = [x_i, x^{\prime}] = [x_i, a_1x_1+ a_2x_2+ \cdots + a_{2k+1}x_{2k+1}]$  for all $2\le i\le 2k+1$. From these equations we get $a_2=a_3=\cdots=a_{k+1}=a_{k+3}=\cdots=a_{2k+1}=0$. Furthermore,  $[x_1, [x_2, x_{k+3}]]=[[x_1, x_2], x_{k+3}]+ [x_2, [x_1, x_{k+3}]] = 0$ gives $a_{k+2}=0$. Therefore, $x^{\prime}=a_1x_1\in Leib(A)$ which is a contradiction since $[x_1, x_{k+2}]\neq0$.
\par
Now suppose $S=B_{2k}(c=0)$ ($k\neq1$). Note that here $n=2k$. In this case  the nonzero products are:  $[x_1, x_{2k}]=[x_i, x_{2k+1-i}]=[x_{k+i}, x_{k-i+2}]= x^{\prime} = a_1x_1+ a_2x_2+ \cdots + a_{2k}x_{2k}$ for $2\le i\le k$. Hence for all $2\le i\le 2k$ we have
 $0=[x_1, [x_i, x_{2k}]]=[[x_1, x_i], x_{2k}]+ [x_i, [x_1, x_{2k}]]= [x_i, [x_1, x_{2k}]]$ since $[x_i, x_{2k}]=0$ and $[x_1, x_i]\in Leib(A)$. Then we have the equations $0=[x_i, [x_1, x_{2k}]]=[x_i, a_1x_1+ a_2x_2+ \cdots + a_{2k}x_{2k}]$ (for all $2\le i\le 2k$)  implying $a_2=a_3=\cdots=a_{2k-1}=0$. Also since $[x_1, [x_2, x_{2k-1}]]=[[x_1, x_2], x_{2k-1}]+ [x_2, [x_1, x_{2k-1}]] = 0$  we get $a_{2k}=0$. Thus $x^{\prime}=a_1x_1\in Leib(A)$ which is a contradiction since $[x_1, x_{2k}]\neq0$.
\par
Finally, suppose $S=B_{2}(c=0)$. In this case we have only one nonzero product:  $[x_1, x_2]= x^{\prime} = a_1x_1+a_2x_2$. Hence $a_2\neq0$ since $x^{\prime} \in Leib(A)$. 
Then $0=[x^{\prime}, x_2] = [a_1x_1+a_2x_2, x_2]=a_1x^{\prime}$ implies that $a_1=0$. Now applying the following change of basis:  $x^{\prime}_1=\frac{1}{a_2}x_1+a_2x_2, x^{\prime}_2=a_2x_2$, we observe that $A$ is the $2$-dimensional cyclic Leibniz algebra as stated.
\end{proof}

\bigskip
\section{Classification of 3-Dimensional Leibniz Algebras}
\par

In this section, we revisit the classification of $3-$dimensional non-Lie non-nilpotent solvable Leibniz algebras. By Theorem \ref{solvable}, there is no non-split non-Lie non-nilpotent solvable Leibniz algebra $A$ of dimension $3$ with $\dim(A^2)=1$. Hence it suffices to classify the $3-$dimensional non-Lie non-nilpotent solvable Leibniz algebras with $\dim(A^2)=2$.

\begin{thm} \label{solv-clas}
Let $A$ be a non-split non-Lie non-nilpotent solvable Leibniz algebra of dimension $3$ with $\dim(A^2)=2$. Then $A$ is spanned by $\{x, y, z\}$ with the nonzero products given by one of the following:
\begin{description}
\item[1] $[x, x]=z, [x, z]=y, [x, y]=y$.
\item[2] $[x, z]=\alpha z, [x, y]=y,     \quad  \alpha\in \cc \backslash \{0\}$.
\item[3] $[x, z]=y, [x, y]=-\frac{1}{4}z+y$.
\item[4] $[x, x]=z, [x, y]=y=-[y, x]$.
\item[5] $[x, z]=\alpha z, [x, y]=y=-[y, x],  \quad \alpha\in \cc \backslash \{0\}$
\item[6] $[x, x]=z, [x, y]=y=-[y, x], [x, z]= 2z, [y, y]=z$.
\end{description}
\end{thm}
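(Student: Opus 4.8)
The plan is to build everything on the basic identity $[[a,a],b]=0$, valid in any left Leibniz algebra: applying the derivation $L_a$ to $[a,b]$ gives $[a,[a,b]]=[[a,a],b]+[a,[a,b]]$, whence $[[a,a],b]=0$, i.e. $[Leib(A),A]=0$. Since $Leib(A)\subseteq A^2$ and $A$ is non-Lie, $Leib(A)\neq0$; and as $\dim A^{2}=2$ has codimension one, I fix $x\notin A^{2}$, write $A=\cc x\oplus A^{2}$, and put $T:=L_x|_{A^2}$, a single operator on the two-dimensional space $A^{2}$. By the Engel-type theorem for Leibniz algebras, $A$ is nilpotent iff every $L_a$ is nilpotent, and a short computation reduces this to nilpotency of $T$; thus the hypothesis \emph{non-nilpotent} means precisely that $T$ has a nonzero eigenvalue. (Note that here solvability is automatic, since a two-dimensional derived algebra is always solvable, and non-splitness is automatic, since a splitting $A=I_1\oplus I_2$ would force a two-dimensional summand to be perfect; so the effective hypotheses are only non-Lie and non-nilpotent.) I then split according to $\dim Leib(A)\in\{1,2\}$, which will produce the blocks $\{1,2,3\}$ and $\{4,5,6\}$.

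Suppose first $Leib(A)=A^{2}$ (dimension two). Then $[A^{2},A]=0$, so the entire multiplication is encoded by the pair $(T,v_0)$ with $v_0:=[x,x]\in A^{2}$; conversely one verifies directly that for \emph{any} operator $T$ and vector $v_0$ the map $L_x$ is a derivation, so no further relation is imposed. The classification thus reduces to the normal forms of $(T,v_0)$ under conjugation by $GL(A^{2})$, the rescaling $x\mapsto\mu x$ (sending $(T,v_0)\mapsto(\mu T,\mu^{2}v_0)$), and the shift $x\mapsto x+w$ (sending $v_0\mapsto v_0+Tw$), subject to $T$ non-nilpotent and $\cc v_0+\operatorname{Im}T=A^{2}$ (which is exactly $\dim Leib(A)=2$). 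Running through the Jordan form of $T$ gives: $T$ invertible and diagonalizable yields family $(2)$; $T$ a single Jordan block yields $(3)$; $T$ of rank one with eigenvalues $\{1,0\}$, forcing $v_0\notin\operatorname{Im}T$, yields $(1)$. Nilpotent $T$ is excluded by non-nilpotency, and $T=0$ is excluded as it would make $\dim A^{2}\le 1$.

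Suppose next $\dim Leib(A)=1$, say $Leib(A)=\langle z\rangle$, so $[z,\,\cdot\,]=0$. Because brackets are antisymmetric modulo $Leib(A)$, the quotient $\bar A=A/\langle z\rangle$ is a two-dimensional Lie algebra with one-dimensional derived algebra, hence the non-abelian one; choosing $x,y$ with $[\bar x,\bar y]=\bar y$ gives $A^{2}=\langle y,z\rangle$ and structure constants $[x,y]=y+az$, $[y,x]=-y+bz$, $[x,x]=cz$, $[y,y]=dz$, $[x,z]=py+qz$, $[y,z]=ry+sz$ (the squares lie in $\langle z\rangle$). I would then run the Leibniz identity on the relevant triples: expanding $[x,[z,w]]=0$ and $[y,[z,w]]=0$ forces $p=0$ and $r=0$, the triple $(x,y,z)$ forces $s=0$, and the remaining identities collapse to $d(q-2)=0$ and $a=b(q-1)$. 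The eigenvalue $q$ of $T$ (the one besides $1$) is an isomorphism invariant, since $\operatorname{Aut}(\bar A)$ cannot rescale the $\bar x$-direction. Using the residual changes $y\mapsto\lambda y+\mu z$, $x\mapsto x+\beta y+\gamma z$, $z\mapsto\rho z$ I would normalize $a=b=0$ and then $c,d$, splitting on $q$: the value $q=0$ gives $(4)$ (with $c$ scaled to $1$, while $c=0$ is impossible as it would make the algebra Lie), $q=2$ with $d\neq0$ gives $(6)$, and every remaining $q$ (with $d=0$ forced) gives $(5)$ with $\alpha=q$.

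Distinctness is recorded at the end: families $(1)$--$(3)$ have $\dim Leib(A)=2$ while $(4)$--$(6)$ have $\dim Leib(A)=1$, and within each block the Jordan type of $T$, the invariant $q$, and whether $[y,y]=0$ separate the cases. The main obstacle will be the bookkeeping in the second case: extracting \emph{every} constraint the Leibniz identity imposes on the chosen triples, and then organizing the residual basis freedom so that the successive normalizations of $a,b,c,d$ do not undo one another. The couplings $a=b(q-1)$ and $d(q-2)=0$ force the exceptional eigenvalues $q\in\{0,1,2\}$ to be handled separately, and one must carefully rule out the degenerate specializations in which $\dim A^{2}$ drops below two or $Leib(A)$ collapses to zero (the Lie case).
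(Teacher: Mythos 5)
Your proposal is correct, and it reaches the same six families by a genuinely different organization of the argument. The paper splits into four cases according to the pair $(\dim Leib(A),\dim Z(A))$ and produces each normal form by explicit, rather ad hoc changes of basis; you split only on $\dim Leib(A)$ and replace the secondary case division by structural invariants. In the case $Leib(A)=A^2$ your reduction to the pair $(T,v_0)=(L_x|_{A^2},[x,x])$ modulo the action you describe is valid (I checked that the Leibniz identity imposes no condition on $(T,v_0)$ once $[A^2,A]=0$), and the Jordan type of $T$ reproduces the paper's cases: singular $T$ is the paper's $\dim Z(A)=1$ subcase giving $(1)$, while the paper's discriminant condition $\alpha_2\beta_1-\beta_2\alpha_1\neq-\frac{1}{4}(\alpha_1+\beta_2)^2$ is exactly your diagonalizable-versus-Jordan-block dichotomy separating $(2)$ from $(3)$. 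In the case $\dim Leib(A)=1$ your two constraints $a=b(q-1)$ and $d(q-2)=0$ are exactly what the Leibniz identities yield, and the normalization goes through for every $q$ (the shift $y\mapsto y-bz$ kills $a$ and $b$ simultaneously, consistently with $a=b(q-1)$). Two small points should be recorded when writing this up: (i) in the rank-one case the condition $v_0\notin\operatorname{Im}T$ does not by itself give form $(1)$ when $v_0\in\ker T$; one must first shift $x$ by a vector $w$ with $Tw\neq 0$ so that the new $v_0$ satisfies $Tv_0\neq 0$, after which $z=v_0$, $y=Tv_0$ gives $(1)$ literally; (ii) your normalized representative for $q=2$, $d\neq 0$ has $[x,x]=0$ rather than the paper's $[x,x]=z$ in family $(6)$, but the substitution $x\mapsto x-\frac{1}{2}z$ shows the two are isomorphic. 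What your route buys is that the classification is driven by isomorphism invariants ($\dim Leib(A)$, the Jordan type of $L_x$ on $A^2$, the eigenvalue $q$), so exhaustiveness and the pairwise distinctness of the families come for free; what it costs is the need to justify the auxiliary reductions (the Engel-type characterization of nilpotency via $T$, and the automatic solvability and non-splitness) that the paper simply assumes as hypotheses.
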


\begin{proof} Since $A$ is a non-split non-Lie non-nilpotent solvable Leibniz algebra of dimension $3$ with $\dim(A^2)=2$ we have $\dim(Z(A))\le1$ and  $1\le \dim(Leib(A))\le 2$.

Suppose $\dim(Leib(A))=2$ and $\dim(Z(A))=1$. Since $Leib(A)\subseteq Z^l(A)$ and $Z^l(A)\neq A$ we have $Z(A)\subseteq Z^l(A)=Leib(A)=A^2$. Choose $0 \neq a \in A \setminus Leib(A)$ such that $Z(A)={\rm span}\{a^2 = [a,a]\}$. Extend this to a basis $\{a^2, b\}$ of $Leib(A) = A^2$. Then 
$A={\rm span}\{a, a^2, b\}$ with the nonzero products: $[a, a]=a^2, [a, b]=\alpha_1a^2+\alpha_2b$  where $\alpha_2\neq0$. Defining $x= \frac{1}{\alpha_2}a+b, y= \frac{\alpha_1}{\alpha_2}a^2+b, z= (\frac{1+\alpha_1\alpha_2}{\alpha^2_2})a^2+b$, we get $A = {\rm span}\{x, y, z\}$ with the nonzero products given in (1).

If $\dim(Leib(A))=2$ and $\dim(Z(A))=0$, then as before we choose basis $\{a^2, b\}$ for $Leib(A) = A^2$ and $A = {\rm span}\{a, a^2, b\}$ with nonzero products: $[a, a]=a^2, [a, a^2]=\alpha_1a^2+\alpha_2b, [a, b]=\beta_1a^2+\beta_2b$. Since $\dim(Z(A)) = 0$, $[a, \beta_2a^2-\alpha_2b]=(\beta_2\alpha_1-\alpha_2\beta_1)a^2$ implies $\beta_2\alpha_1-\alpha_2\beta_1\neq 0$. If $\alpha_2=0$ then  $\beta_2\alpha_1-\alpha_2\beta_1\neq0$ gives $\alpha_1, \beta_2 \neq0$.
If $\alpha_2=0$ and $\beta_1 =0$, then setting $x=\frac{1}{\beta_2}a-\frac{1}{\alpha_1 \beta_2}a^2, y=b, z=a^2$ we have $A = {\rm span}\{x, y, z\}$ with the nonzero products given in (2) where $\alpha = \frac{\alpha_1}{\beta_2}$. If $\alpha_2=0$ and $\beta_1 \neq 0$ and $\alpha_1 = \beta_2$ then choosing $x=\frac{1}{2\alpha_1}a-\frac{1}{2\alpha^2_1}a^2, y=\frac{\alpha_1+\beta_1}{2\alpha_1}a^2+ \frac{1}{2}b, z=a^2+b$ gives $A = {\rm span}\{x, y, z\}$ with the nonzero products given in (3).
On the other hand, if $\alpha_2=0$ and $\beta_1 \neq 0$ and $\alpha_1 \neq \beta_2$ then choosing $x=\frac{1}{\beta_2}a-\frac{1}{\alpha_1\beta_2}a^2, y=\frac{\beta_1}{\beta_2-\alpha_1}a^2+ b, z=\frac{1}{\beta^2_2} a^2$ gives $A = {\rm span}\{x, y, z\}$ with the nonzero products given in (2)
with $\alpha = \frac{\alpha_1}{\beta_2} \in\cc \backslash \{0, 1\}$.
If $\alpha_2 \neq 0$ and $\alpha_1 + \beta_2 = 0$ then choosing $x=\frac{1}{(\alpha_2\beta_1-\beta_2\alpha_1)^{1/2}}a-\frac{\alpha_1}{(\alpha_2\beta_1-\beta_2\alpha_1)^{3/2}}a^2-\frac{\alpha_2}{(\alpha_2\beta_1-\beta_2\alpha_1)^{3/2}}b,  y=[(\alpha_2\beta_1-\beta_2\alpha_1)^{1/2}-\alpha_1]a^2-\alpha_2b, z=[(\alpha_2\beta_1-\beta_2\alpha_1)^{1/2}+\alpha_1]a^2+\alpha_2b$ gives 
$A = {\rm span}\{x, y, z\}$ with the nonzero products given in (2) with $\alpha=-1$. 
If $\alpha_2 \neq 0$, $\alpha_1 + \beta_2 \neq 0$ and $\alpha_2\beta_1 -\beta_2\alpha_1 = -\frac{1}{4}(\alpha_1 + \beta_2)^2$ then choosing $x = \frac{1}{\alpha_1+ \beta_2}(a-\frac{\beta_2}{\beta_2\alpha_1-\alpha_2\beta_1}a^2+\frac{\alpha_2}{\beta_2\alpha_1-\alpha_2\beta_1}b), y =\alpha_1a^2+ \alpha_2b, z =(\alpha_1+ \beta_2)a^2$ gives $A = {\rm span}\{x, y, z\}$ with the nonzero products given in (3). Now suppose $\alpha_2 \neq 0$, $\alpha_1 + \beta_2 \neq 0$ and $\alpha_2\beta_1 -\beta_2\alpha_1 \neq -\frac{1}{4}(\alpha_1 + \beta_2)^2$. Choose $\alpha$ such that $-\frac{\alpha}{(1+\alpha)^2}=\frac{\alpha_2\beta_1-\beta_2\alpha_1}{(\alpha_1+\beta_2)^2}$. Note that $\alpha \in \cc \backslash \{-1, 0, 1\}$. Set  $x= \frac{1 + \alpha}{(\alpha_1+\beta_2)}(a-\frac{\beta_2}{\beta_2\alpha_1-\alpha_2\beta_1}a^2+\frac{\alpha_2}{\beta_2\alpha_1-\alpha_2\beta_1}b), \\ y=[-\frac{\alpha(\alpha_1+\beta_2)}{1-\alpha} + \frac{(1 + \alpha)\alpha_1}{(1-\alpha)}]a^2 + \frac{(1 + \alpha)\alpha_2}{(1-\alpha)}b, 
z=[-\frac{(1+\alpha)\alpha_1}{(1-\alpha)}+\frac{\alpha_1+\beta_2}{1-\alpha}]a^2-\frac{(1+\alpha)\alpha_2}{(1-\alpha)}b$.
Then $A = {\rm span}\{x, y, z\}$ with the nonzero products given in (2).

Now suppose $\dim(Leib(A))=1$ and $\dim(Z(A))=1$. Then $Leib(A) = Z(A)$. Assume that $Z(A) \neq Z^l(A)$. Since $Z^l(A) \neq A$, we have $\dim(Z^l(A)) = 2$. Choose $0 \neq a \in A \setminus Leib(A)$
such that $Leib(A) = Z(A) = {\rm span}\{a^2 = [a, a]\}$. Extend it to a basis $\{a^2, b\}$ for $Z^l(A)$. Since $a \nin Z^l(A)$, we have $A = {\rm span}\{a, a^2, b\}$ with the nonzero products: $[a, a]=a^2, [a, a^2]=\alpha a^2, [a, b]=\beta_1a^2+\beta_2b$. Then $[a+b, a+b]\in Leib(A)$ implies that $\beta_2=0$ which is a contradiction since $\dim(A^2)=2$. Hence $Leib(A) = Z(A) = Z^l(A) = {\rm span}\{a^2\}$. Extending it to a basis $\{a^2, b\}$ of $A^2$ we observe that $A = {\rm span}\{a, a^2, b\}$. Since $A/Leib(A)$ is a Lie algebra, we have $[a+Leib(A), b+Leib(A)]=-[b+Leib(A), a+Leib(A)]$ which implies $[b, a] + [a, b] \in Leib(A)$. Therefore, the nonzero products in $A$ are:  $[a, a]=a^2, [b, b]=\beta a^2, [a, b]=\alpha_1a^2+\gamma b, [b, a]=\alpha_2a^2-\gamma b$ where $\gamma \neq 0$ since $\dim(A^2) = 2$. Then the Leibniz identities 
 $[b, [a, b]]= [[b, a], b]+ [a, [b, b]]$ and $[a, [b, a]]= [[a, b], a]+ [b, [a, a]]$
 give $\alpha_2=-\alpha_1$ and $\beta=0$. Now setting $x=\frac{1}{\gamma}a, y=\alpha_1a^2+\gamma b, z=\frac{1}{\gamma^2}a^2$, we obtain $A = {\rm span}\{x, y, z\}$ with the nonzero products given in (4). 
 
 Finally, suppose $\dim(Leib(A))=1$ and $\dim(Z(A))=0$. As before choose $0 \neq a \in A \setminus Leib(A)$ such that $Leib(A) = {\rm span}\{a^2 = [a, a]\}$. Let $A^2 = {\rm span}\{a^2 ,b\}$. Then 
 $A = {\rm span}\{a, a^2, b\}$ with the nonzero products: $[a, a]=a^2, [a, a^2]=\delta a^2, [b, b]=\beta a^2, [b, a^2]=\gamma a^2, [a, b]=\alpha_1 a^2+ \theta b, [b, a]=\alpha_2a^2-\theta b$ since as before $[a, b] +[b, a] \in Leib(A)$. Since $\dim(A^2) = 2$ we have $\theta \neq 0$. Then the identity 
$[a, [b, a^2]]= [[a, b], a^2]+ [b, [a, a^2]]$
implies $\theta \gamma=0$, and hence $\gamma=0$ since $\theta\neq0$. Since $\dim(Z(A)) =0$ we have $\delta \neq 0$. Now it follows from the identity $[a, [b, a]]= [[a, b], a]+ [b, [a, a]]$ that 
$\theta(\alpha_1+\alpha_2) = \alpha_2\delta$. Since $A$ is solvable and $\dim(A^2) = \dim(A^{(2)}) = 2$, we have 
$\dim(A^{(3)}) \leq 1$. Suppose $\dim(A^{(3)}) = 1$. Then $\beta \neq 0$ and the identity $[b, [a, b]]= [[b, a], b]+ [a, [b, b]]$ gives $2\theta\beta=\delta\beta$ which implies $\theta=\frac{\delta}{2}$. Hence $\alpha_1=\alpha_2$
since $\theta(\alpha_1+\alpha_2) = \alpha_2\delta$. Now choosing  $x=\frac{2}{\delta}a+ (\frac{\beta\delta^2}{8}- \frac{2}{\delta^2})a^2,  y=\frac{\delta}{2} b- \alpha_1a^2,  z= \frac{\beta\delta^2}{4}a^2$
we have $A = {\rm span}\{x, y, z\}$ with the nonzero products given in (6). Now suppose 
$\dim(A^{(3)}) = 0$. Then $\beta = 0$. 
Choosing $x=\frac{1}{\theta}a- \frac{1}{\theta\delta}a^2, y=\theta b - \alpha_2a^2, z= a^2$ we obtain $A = {\rm span}\{x, y, z\}$ with the nonzero products given in (5) where $\alpha=\frac{\delta}{\theta}$. 

\end{proof}

\begin{rmk}
If $\alpha_1, \alpha_2\in \cc \backslash \{0\}$ such that $\alpha_1\neq \alpha_2$, then the corresponding two Leibniz algebras of type (2) are isomorphic if and only if $\alpha_1\alpha_2=1$. Rest of the algebras in Theorem \ref{solv-clas} are pairwise nonisomorphic.
\end{rmk}

\end{document}